	\newcommand{\Dutchvon}[2]{#2}
  \renewcommand{\phi}{\varphi}
	\newcommand{\mathds}{\mathbb}
	\newcommand{\Q}{ \mathds{Q} }
	\newcommand{\Z}{ \mathds{Z} }
	\newcommand{\group}{G}
	\DeclareMathOperator{\GF}{GF}
	\newcommand{\ring}{R}
	\DeclareMathOperator{\rank}{rk}
	\newcommand{\parf}{\ensuremath{\mathbb{P}}} 
	\renewcommand{\rank}{\mathrm{rk}}
	\newcommand{\ignore}[1]{}
	\DeclareMathOperator{\closure}{cl}
	\let\Oldsetminus\setminus
	\renewcommand{\setminus}{\ensuremath{-}}
	\newcommand{\delete}{\ensuremath{\!\Oldsetminus\!}}
	\newcommand{\contract}{\ensuremath{/}}
	\newtheorem{theorem}{Theorem}[section]
	\newtheorem{lemma}[theorem]{Lemma}
	\newtheorem{proposition}[theorem]{Proposition}
	\newtheorem{conjecture}[theorem]{Conjecture}
	\theoremstyle{definition}
	\newtheorem{definition}[theorem]{Definition}
	\newenvironment{claimenv}{\list{}{\rightmargin0pt\leftmargin10pt\topsep0pt}\item[]}{\endlist}
\begin{document}
\title{An obstacle to a decomposition theorem for near-regular matroids\thanks{Parts of this research have appeared in the third author's PhD thesis \cite{vZ09}. The research of all authors was partially supported by a grant from the Marsden Fund of New Zealand. The first author was also supported by a FRST Science \& Technology post-doctoral fellowship. The third author was also supported by NWO, grant 613.000.561.}}
\author{Dillon Mayhew\thanks{School of Mathematics, Statistics and Operations Research, Victoria University of Wellington, New Zealand. E-mail: \url{Dillon.Mayhew@msor.vuw.ac.nz}, \url{Geoff.Whittle@msor.vuw.ac.nz}} \and Geoff Whittle\footnotemark[2] \and Stefan H. M. van Zwam\thanks{Centrum Wiskunde en Informatica, Postbus 94079, 1090 GB Amsterdam, The Netherlands. E-mail: \url{Stefan.van.Zwam@cwi.nl}}}

\maketitle

\abstract{
	Seymour's Decomposition Theorem for regular matroids states that any matroid representable over both $\GF(2)$ and $\GF(3)$ can be obtained from matroids that are graphic, cographic, or isomorphic to $R_{10}$ by $1$-, $2$-, and $3$-sums. It is hoped that similar characterizations hold for other classes of matroids, notably for the class of near-regular matroids. Suppose that all near-regular matroids can be obtained from matroids that belong to a few basic classes through $k$-sums. Also suppose that these basic classes are such that, whenever a class contains all graphic matroids, it does not contain all cographic matroids. We show that in that case 3-sums will not suffice.
}
%
%

\section{Introduction}
A regular matroid is a matroid representable over every field. Much is known about this class, the deepest result being Seymour's Decomposition Theorem:

\begin{theorem}[{Seymour~\cite{Sey80}}]\label{thm:regdec}
  Let $M$ be a regular matroid. Then $M$ can be obtained from matroids that are graphic, cographic, or equal to $R_{10}$ through $1$-, $2$-, and $3$-sums.
\end{theorem}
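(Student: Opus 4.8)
The plan is an induction on $|E(M)|$ that strips off $1$-, $2$-, and $3$-sums until an indecomposable piece is reached, and then identifies the pieces. Two structural facts are needed as input, and would be established before this theorem in a self-contained treatment: (a) if a regular matroid is a $k$-sum $M_1\oplus_k M_2$ with $k\le 3$, then $M_1$ and $M_2$ are isomorphic to minors of it, hence are themselves regular; and (b) a connected matroid that is not $3$-connected is a $1$- or $2$-sum of two strictly smaller matroids, while a $3$-connected binary matroid with a $3$-separation $(A,B)$ satisfying $\min(|A|,|B|)\ge 4$ is a $3$-sum $M_1\oplus_3 M_2$ with both $M_i$ strictly smaller minors. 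Granting (a) and (b): if $M$ is disconnected, not $3$-connected, or $3$-connected with such a ``large'' $3$-separation, write it as a $1$-, $2$-, or $3$-sum of strictly smaller regular matroids and invoke the inductive hypothesis. It remains to treat $M$ that is \emph{internally $4$-connected}, i.e.\ $3$-connected with every $3$-separation having a side of size at most $3$.

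For such $M$ the goal is to show that $M$ is graphic, cographic, or isomorphic to $R_{10}$. First, $R_{10}$ is a \emph{splitter} for the class of regular matroids: a finite check shows $R_{10}$ has no $3$-connected regular single-element extension or coextension, so by Seymour's Splitter Theorem~\cite{Sey80} every $3$-connected regular matroid with an $R_{10}$-minor equals $R_{10}$; hence we may assume $M$ has no $R_{10}$-minor. Second, the $12$-element regular matroid $R_{12}$ forces a ``large'' $3$-separation: any $3$-connected matroid with an $R_{12}$-minor has a $3$-separation into two sides each of size at least $6$. As $M$ is internally $4$-connected, it therefore has no $R_{12}$-minor. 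The theorem has now been reduced to: an internally $4$-connected regular matroid with neither an $R_{10}$-minor nor an $R_{12}$-minor is graphic or cographic.

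To establish that, I would invoke Tutte's excluded-minor characterization of graphic matroids: a regular matroid (equivalently, a binary matroid with no $F_7$- or $F_7^*$-minor) is graphic if and only if it has no $M^*(K_5)$- and no $M^*(K_{3,3})$-minor, and dually is cographic if and only if it has no $M(K_5)$- and no $M(K_{3,3})$-minor. Thus if $M$ were neither graphic nor cographic, it would simultaneously contain one of $M(K_5),M(K_{3,3})$ and one of $M^*(K_5),M^*(K_{3,3})$ --- a ``graphic'' obstruction together with a ``cographic'' obstruction --- and a contradiction must be produced. This is the main obstacle and the technical heart of the theorem. It is handled by a detailed analysis of the $3$-connected regular matroids containing an $M(K_5)$- or $M(K_{3,3})$-minor: one represents the matroids lying ``just past'' the graphic ones by \emph{grafts} (a graph equipped with a distinguished set of vertices), follows the behaviour of cocircuits and $3$-separations when single elements are added, and applies the Splitter Theorem repeatedly to exhaust the cases; the conclusion is that carrying a graphic obstruction and a cographic obstruction at the same time, while remaining internally $4$-connected, cannot happen without an $R_{10}$- or $R_{12}$-minor appearing.

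Assembling the cases: every regular $M$ is either a $1$-, $2$-, or $3$-sum of strictly smaller regular matroids (dispatched by induction), or $3$-connected and equal to $R_{10}$, or internally $4$-connected and therefore, by the argument above, graphic or cographic. In each case $M$ is obtained from graphic matroids, cographic matroids, and $R_{10}$ through $1$-, $2$-, and $3$-sums, completing the induction. I expect the connectivity reductions and the $R_{10}$-splitter step to be routine; essentially all the difficulty lies in the graft-based case analysis of the third paragraph.
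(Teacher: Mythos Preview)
The paper does not prove this theorem at all: Theorem~\ref{thm:regdec} is stated with a citation to Seymour~\cite{Sey80} and used as background, and the only further remark is the reformulation that an internally 4-connected regular matroid is graphic, cographic, or $R_{10}$. There is therefore no ``paper's own proof'' to compare your proposal against.

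Your outline is a faithful sketch of Seymour's original argument: the reduction to internally 4-connected matroids via 1-, 2-, and 3-sums, the splitter property of $R_{10}$, the induced 3-separation coming from an $R_{12}$-minor, and the graft/case analysis showing that an internally 4-connected regular matroid with no $R_{10}$- or $R_{12}$-minor is graphic or cographic. Two small points of precision: the claim that an $R_{12}$-minor forces a 3-separation with both sides of size at least $6$ holds for \emph{regular} (or at least binary) matroids, not arbitrary 3-connected matroids, so you should say so explicitly; and in (b) the 3-sum decomposition for binary matroids requires the 3-separation to be exact, which is automatic here but worth noting. As you correctly flag, the third paragraph is where essentially all the work lies, and ``a detailed analysis \ldots\ applies the Splitter Theorem repeatedly'' is an accurate description of what Seymour does but is of course not itself a proof.
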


A class ${\cal C}$ of matroids is \emph{polynomial-time recognizable} if there exists an algorithm that decides, for any matroid $M$, in time $f(|E(M)|, \tau)$ whether or not $M \in {\cal C}$, where $\tau$ is the time of one rank evaluation, and $f(x,y)$ a polynomial. Seymour~\cite{Sey81} showed that the class of graphic matroids is polynomial-time recognizable. Also every finite class is polynomial-time recognizable. Using these facts Truemper~\cite{Tru82} (see also Schrijver \cite[Chapter 20]{Sch86}) showed the following:

\begin{theorem}\label{cor:regpoly}
  The class of regular matroids is polynomial-time recognizable.
\end{theorem}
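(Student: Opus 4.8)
The plan is to read Seymour's Decomposition Theorem (Theorem~\ref{thm:regdec}) as a recursive algorithm. Suppose $M$ is given by a rank oracle and write $n = |E(M)|$. The base case: if $n$ is at most a fixed constant (say $n \le 7$), then up to isomorphism $M$ lies in a finite list, so regularity is decidable in constant time; this is the ``finite classes are recognizable'' remark applied to regular matroids of bounded size. Also $M$ is regular if and only if every connected component of $M$ is regular, and the components are computable from the oracle in polynomial time, so we may assume $M$ is connected.

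If $M$ is not $3$-connected, then, using the classical polynomial-time matroid connectivity algorithms (via matroid intersection), we find a $1$- or $2$-separation and hence an expression $M = M_1 \oplus_1 M_2$ or $M = M_1 \oplus_2 M_2$ with $|E(M_i)| < n$ for $i = 1,2$; rank oracles for the parts are easily simulated from that of $M$, and since $1$- and $2$-sums preserve regularity in both directions, $M$ is regular if and only if both parts are. We recurse. It remains to treat $3$-connected $M$.

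Here we first run, in turn, the polynomial-time graphicness test of Seymour~\cite{Sey81} on $M$; the same test on the dual $M^*$ (whose oracle is given by $r_{M^*}(X) = |X| + r_M(E(M) - X) - r_M(E(M))$), which decides cographicness; and a direct isomorphism check against $R_{10}$ (bounded, as $|E(R_{10})| = 10$). If any of these succeeds we output ``regular''. Otherwise we search for a $3$-sum decomposition: for each candidate $3$-separation $(A,B)$ of $M$ with $|A|, |B| \ge 4$, we form the two matroids $M_1$ (on $A$ together with a new common triangle) and $M_2$ (on $B$ together with the same triangle) of the associated $3$-sum, verify that $M_1 \oplus_3 M_2 = M$, simulate their oracles, and recursively test each for regularity; we output ``regular'' precisely if some candidate passes all three checks. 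Correctness follows from Theorem~\ref{thm:regdec}: a $3$-connected matroid that is neither graphic, nor cographic, nor isomorphic to $R_{10}$, and that is regular, must occur in its Seymour decomposition as a $3$-sum of two strictly smaller regular matroids (it cannot be a $1$- or $2$-sum, being $3$-connected), so some candidate separation works; conversely $3$-sums of regular matroids are regular, and a matroid with a genuine such separation into verified regular parts reconstructs to a regular matroid, so there are no false positives.

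The hard part is the $3$-connected case, on two counts. First, one must ensure that only polynomially many candidate $3$-separations are tried: a $3$-connected matroid may have exponentially many $3$-separations in general, so one invokes the structure of the family of $3$-separations (in the spirit of the Cunningham--Edmonds decomposition), equivalently the algorithmic sharpening of Seymour's argument, to cut down to a polynomial-size family. Second, one must bound the running time of the recursion: each split replaces $M$ by two strictly smaller matroids --- for a $3$-sum along a triangle, $|E(M_1)| + |E(M_2)| = n + 6$ with each part of size at least $7$, hence at most $n - 1$ --- and a bookkeeping argument shows that the number of recursive calls and the total size of all matroids produced are polynomial in $n$. Together with the polynomial cost of the connectivity routines, the graphicness tests, and the oracle translations, this yields the desired bound $f(n, \tau)$ with $f$ a polynomial. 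The base case and the $1$- and $2$-sum reductions are routine.
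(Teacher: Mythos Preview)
The paper does not prove Theorem~\ref{cor:regpoly}; it merely quotes it, attributing the result to Truemper~\cite{Tru82} (and pointing to Schrijver~\cite[Chapter 20]{Sch86} for an exposition). So there is no ``paper's own proof'' to compare against, and your proposal has to stand on its own.

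Your outline is broadly the right shape, and your treatment of the base case and of $1$- and $2$-sums is fine. The gap is exactly where you flag it yourself: in the $3$-connected case you need to find, in polynomial time, a single $3$-separation along which the matroid splits as a $3$-sum of two smaller regular pieces. You write that ``one invokes the structure of the family of $3$-separations (in the spirit of the Cunningham--Edmonds decomposition) \ldots\ to cut down to a polynomial-size family'', but this is a hope, not an argument. Cunningham--Edmonds handles $2$-separations; the analogous tree structure for $3$-separations is considerably more delicate, was developed much later, and in any case does not by itself tell you which $3$-separation is compatible with a \emph{regular} $3$-sum. Without a concrete mechanism here, your algorithm either enumerates exponentially many separations or is not guaranteed to find a good one, and the proof does not go through.

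The missing idea, which is what Truemper's argument supplies, is that Seymour's proof of Theorem~\ref{thm:regdec} gives more than the bare statement: any $3$-connected regular matroid that is neither graphic, nor cographic, nor isomorphic to $R_{10}$ must contain an $R_{12}$-minor, and the essentially unique non-trivial $3$-separation of $R_{12}$ is \emph{induced} in $M$. Algorithmically, one therefore searches for an $R_{12}$-minor (there are at most $|E(M)|^{12}$ candidate element tuples, and each can be tested with a bounded number of rank queries), and then extends its $3$-separation to one of $M$ in polynomial time. If no $R_{12}$-minor is found, $M$ is not regular. This is the step your sketch is lacking; once it is in place, the recursion and size bookkeeping you describe do yield the polynomial bound.
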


A \emph{near-regular matroid} is a matroid representable over every field, except possibly $\GF(2)$. Near-regular matroids were introduced by Whittle \cite{Whi95,Whi97}. The following is one of his results:
\begin{theorem}[{Whittle~\cite{Whi97}}]
  Let $M$ be a matroid. The following are equivalent:
  \begin{enumerate}
    \item $M$ is representable over $\GF(3)$, $\GF(4)$, and $\GF(5)$;
    \item $M$ is representable over $\Q(\alpha)$ by a totally near-unimodular matrix;
    \item $M$ is near-regular.
  \end{enumerate}
\end{theorem}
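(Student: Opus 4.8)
The three conditions are linked most naturally in the cycle (iii)$\Rightarrow$(i)$\Rightarrow$(ii)$\Rightarrow$(iii), of which only the implication (i)$\Rightarrow$(ii) is deep. That (iii)$\Rightarrow$(i) holds needs no argument, since $\GF(3)$, $\GF(4)$ and $\GF(5)$ are among ``all fields except possibly $\GF(2)$''. For (ii)$\Rightarrow$(iii), recall that the near-regular partial field has unit group $\{\pm\alpha^i(1-\alpha)^j : i,j\in\Z\}$ inside $\Q(\alpha)$, and that a matrix over $\Q(\alpha)$ is \emph{totally near-unimodular} when every square submatrix has determinant $0$ or a unit of this form. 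The plan is: given a field $F\neq\GF(2)$, pick $a\in F$ with $a\notin\{0,1\}$ --- possible exactly because $|F|\ge 3$ --- and substitute $\alpha\mapsto a$; this is a ring homomorphism $\Z[\alpha,\alpha^{-1},(1-\alpha)^{-1}]\to F$ sending each unit $\pm\alpha^i(1-\alpha)^j$ to the nonzero element $\pm a^i(1-a)^j$. Since determinants commute with ring homomorphisms, applying it entrywise to a totally near-unimodular representation $A$ of $M$ produces a matrix $A'$ over $F$ whose square submatrices have nonzero determinant precisely when those of $A$ do; hence $A'$ represents $M$ over $F$, and $M$ is near-regular.

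The work is all in (i)$\Rightarrow$(ii). Since representability by a totally near-unimodular matrix is preserved under $1$- and $2$-sums and is inherited by minors, and since every matroid is built by $1$- and $2$-sums from $3$-connected pieces (and small matroids), each a minor of the original, it suffices to treat $3$-connected $M$. For such $M$ two ingredients are needed. First, a $3$-connected ternary matroid has an essentially unique representation over $\GF(3)$. Second, to each $U_{2,4}$-minor --- and more generally to certain small configurations inside $M$ --- one attaches a \emph{cross-ratio}, an element of the representing field other than $0$ and $1$, well defined up to the order-six group generated by $x\mapsto 1-x$ and $x\mapsto 1/x$, whose multiset over $M$ is an invariant of $M$ and the field. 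Over $\GF(3)$ every cross-ratio equals $-1$, the only value available.

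The plan is then to build, by induction on $|E(M)|$ using Tutte's Wheels-and-Whirls Theorem, a representation of $M$ over $\Q(\alpha)$ all of whose subdeterminants are monomials $\pm\alpha^i(1-\alpha)^j$ --- that is, totally near-unimodular --- with cross-ratios among the six fundamental elements $\alpha$, $1-\alpha$, $\alpha^{-1}$, $(1-\alpha)^{-1}$, $\alpha/(\alpha-1)$, $(\alpha-1)/\alpha$. The base cases are wheels $\wheel{n}$ (graphic, hence near-regular) and whirls $\whirl{n}$ (near-regular; e.g.\ $\whirl{2}=U_{2,4}$ is represented by $\begin{pmatrix}1&0&1&1\\0&1&1&\alpha\end{pmatrix}$). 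At the inductive step one passes to $M\delete e$ or $M\contract e$ (still ternary, and still representable over $\GF(4)$ and $\GF(5)$), lifts a totally near-unimodular representation of it, and extends across the single-element addition; the new entries are constrained by the demand that the extension remain representable over $\GF(4)$ and over $\GF(5)$ at once. This is where the two extra fields do their job: the $\GF(4)$-hypothesis rules out ``dyadic'' extensions (a cross-ratio forced to equal $2$, which is $0$ in characteristic $2$), and the $\GF(5)$-hypothesis rules out ``sixth-root-of-unity'' extensions (a cross-ratio of multiplicative order $6$, which $\GF(5)^{*}$ lacks); what survives is precisely the near-regular partial field, so the induction outputs the required matrix. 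In present-day language, this says the universal partial field of $M$ admits a homomorphism to the near-regular partial field.

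The main obstacle is the inductive extension step: showing that a totally near-unimodular representation of $M\delete e$ or $M\contract e$ genuinely extends to one of $M$, and that the local cross-ratio constraints at every configuration are met simultaneously by a single \emph{global} choice of entries. This forces a careful analysis of how $\GF(3)$-, $\GF(4)$- and $\GF(5)$-representations behave under a one-element addition --- including the inequivalent representations that arise on passing to non-$3$-connected minors --- together with a finite but intricate treatment of the small $3$-connected matroids. Ensuring that the combined $\GF(4)$- and $\GF(5)$-obstructions leave exactly the near-regular partial field, and not some strictly larger one, is where essentially all of the effort goes.
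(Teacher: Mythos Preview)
The paper does not prove this theorem. It is quoted from Whittle~\cite{Whi97} as background (hence the attribution in the theorem header), and no argument for it appears anywhere in the present paper; the authors use it only to motivate the study of near-regular matroids. So there is no ``paper's own proof'' to compare your proposal against.

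On the substance of your sketch: the easy directions are fine. Your (iii)$\Rightarrow$(i) is immediate, and your (ii)$\Rightarrow$(iii) via the substitution $\alpha\mapsto a$ for $a\in F\setminus\{0,1\}$ is exactly the standard partial-field homomorphism argument (and is correct as stated). For (i)$\Rightarrow$(ii), what you have written is an outline in the spirit of Whittle's actual proof --- reduce to the $3$-connected case, exploit uniqueness of $\GF(3)$-representations, and control cross-ratios using the simultaneous $\GF(4)$- and $\GF(5)$-representability --- but, as you yourself concede in the final paragraph, the inductive extension step is not carried out. That step is the entire content of the theorem; saying that ``what survives is precisely the near-regular partial field'' and that ``essentially all of the effort goes'' there is an accurate description of where the work lies, not a proof. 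So your proposal is a fair high-level summary of the known argument, with the two trivial implications done and the substantive one left as a programme.
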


  In this theorem $\alpha$ is an indeterminate. A \emph{totally near-unimodular matrix} is a matrix over $\Q(\alpha)$ such that the determinant of every square submatrix is either zero or equal to $(-1)^s \alpha^i (1-\alpha)^j$ for some $s,i,j \in \Z$. Whittle~\cite{Whi97,Whi05} wondered if an analogue of Theorem~\ref{thm:regdec} would hold for the class of near-regular matroids. The following conjecture was made:

\begin{conjecture}\label{con:nregdecompfalse}
  Let $M$ be a near-regular matroid. Then $M$ can be obtained from matroids that are signed-graphic, their duals, or members of some finite set through $1$-, $2$-, and $3$-sums.
\end{conjecture}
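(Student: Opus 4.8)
The plan is to refute Conjecture~\ref{con:nregdecompfalse} --- indeed, to establish the sharper statement announced in the abstract --- by exhibiting an infinite family $M_1, M_2, \ldots$ of near-regular matroids such that each $M_n$ is internally $4$-connected, $|E(M_n)| \to \infty$, and $M_n$ has \emph{both} $M(K_n)$ and $M^*(K_n)$ as minors.

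Granting such a family, the conclusion is formal. In any build-up of a matroid from basic pieces using only $1$-, $2$-, and $3$-sums, every sum actually performed is non-degenerate: both parts have strictly fewer elements, and in a $3$-sum neither part is tiny, so the induced $3$-separation of the result has both sides large. Hence an internally $4$-connected matroid with more than an absolute constant number of elements is not a proper $1$-, $2$-, or $3$-sum, and so in any valid decomposition it must itself be one of the finitely many basic pieces. By pigeonhole, infinitely many $M_n$ lie in one basic class $\mathcal{C}$; since the finite ``sporadic'' class absorbs only finitely many of them, $\mathcal{C}$ is one of the infinite, minor-closed basic classes. But then $\mathcal{C}$ contains $M(K_n)$ and $M^*(K_n)$ for arbitrarily large $n$, hence every graphic and every cographic matroid --- contradicting the hypothesis that a basic class containing all graphic matroids cannot contain all cographic matroids. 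Specialised to the conjecture, this is the (known) fact that the signed-graphic matroids contain all graphic but not all cographic matroids, and dually for their duals. As a sanity check, Theorem~\ref{thm:regdec} forces each $M_n$ with $n$ large to be near-regular but \emph{not} regular --- a regular such matroid would have to be graphic, cographic, or $R_{10}$, none of which has both a large graphic and a large cographic minor --- which also explains why the construction must be delicate.

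The substance is therefore the construction. I would start from a regular template built by gluing a large complete-graphic block to a large cographic block along a small common part, arranged so that both blocks persist as minors, and then modify it to destroy every nontrivial $1$-, $2$-, and $3$-separation while keeping it near-regular: either via an explicitly parametrised presentation $M_n = M[\, I \mid A_n \,]$ with $A_n$ a totally near-unimodular matrix over $\Q(\alpha)$, or by iterating operations that preserve representability over every field (such as $\Delta$--$Y$ exchanges) to break the offending low-order separations. Either way one must verify (a) total near-unimodularity, equivalently representability over $\GF(3)$, $\GF(4)$, and $\GF(5)$; (b) internal $4$-connectivity for all large $n$; and (c) that one prescribed sequence of deletions and contractions recovers $M(K_n)$ and another recovers $M^*(K_n)$.

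I expect the construction to be the main obstacle: near-regularity, internal $4$-connectivity, and the simultaneous presence of a large graphic and a large cographic minor pull against one another, since near-regularity forbids the long lines (and their duals) that one usually exploits to raise connectivity, while making a matroid internally $4$-connected tends to erode planted structure. I would anticipate a carefully chosen parametrised family together with a hands-on verification of (a)--(c), with the representability check the most demanding. One minor subtlety: the pigeonhole step as phrased uses minor-closedness of the relevant basic class; this is automatic for the conjecture (its infinite basic classes are minor-closed, its sporadic class finite), and for the general statement one simply notes the step needs only minor-closedness of whichever class is forced to be both graphic-rich and cographic-rich.
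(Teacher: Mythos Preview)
Your reduction is exactly the paper's: build, for each $n$, an internally $4$-connected near-regular matroid with both $M(K_n)$ and $M^*(K_n)$ as minors; then an internally $4$-connected matroid cannot arise as a nontrivial $1$-, $2$-, or $3$-sum, so it must itself be a basic piece, and minor-closedness of the signed-graphic class (and of its dual class) yields the contradiction. That part is fine and needs no change.

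The genuine gap is the construction, which you yourself flag as ``the main obstacle'' but do not resolve. Your suggestion of starting from a \emph{regular} template obtained by gluing a graphic block to a cographic block and then ``modifying it to destroy every nontrivial $1$-, $2$-, and $3$-separation'' is not a workable plan as stated: by Theorem~\ref{thm:regdec} any such regular glue-up is already a $3$-sum along exactly the interface you introduced, and there is no indicated mechanism that simultaneously (i) kills that $3$-separation, (ii) preserves both large minors, and (iii) keeps the matrix totally near-unimodular. $\Delta$--$Y$ moves preserve representability but do not obviously raise connectivity across a fixed interface, and an ad hoc parametrised family $A_n$ gives you no handle on internal $4$-connectivity.

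What the paper supplies is precisely the missing gadget and the two lemmas that make it go. There is a specific $12$-element, self-dual, internally $4$-connected, near-regular (but not regular) matroid $M_{12}$ with an $M(K_4)$-\emph{restriction} on one six-element set and an $M(K_4)$-\emph{contraction} on the complementary six, and with the crucial feature that no triad of the $M(K_4)$-restriction is a triad of $M_{12}$. One then takes the generalized parallel connection $P_{M(K_4)}(M(K_n),M_{12})$, dualises, and repeats with a second $M(K_n)$. Two lemmas do the work your proposal leaves open: a partial-field version of Brylawski's theorem shows that generalized parallel connection along a modular flat preserves total near-unimodularity (so the result is near-regular), and a connectivity lemma shows that $P_{M(K_4)}(M(K_n),M_2)$ is internally $4$-connected whenever $M_2$ is and no triad of the $K_4$ is a triad of $M_2$. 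The point is that the ``non-regular twist'' is concentrated entirely in the fixed $12$-element piece, not spread through an $n$-dependent matrix, which is what makes all three verifications (near-regularity, internal $4$-connectivity, presence of both minors) tractable.
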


A matroid is \emph{signed-graphic} if it can be represented by a $\GF(3)$-matrix with at most two nonzero entries in each column (see Zaslavsky~\cite{Zas82,Zas82err} for more on these matroids). One difference with the regular case is that not every signed-graphic matroid is near-regular.

Several people have made an effort to understand the structure of near-regular matroids. Oxley et al.~\cite{OVW98} studied maximum-sized near-regular matroids. Hlin{\v{e}}n{\'{y}}~\cite{Hli04} and Pendavingh~\cite{Pen04} have both written software to investigate all 3-connected near-regular matroids up to a certain size. 
Pagano~\cite{Pag98} studied signed-graphic near-regular matroids, and Pendavingh and Van Zwam~\cite{PZ09graphic} studied a closely related class of matroids which they call near-regular-graphic.

Despite these efforts, an analogue to Theorem~\ref{thm:regdec} is still not in sight. In this paper we record an obstacle we found, that will have to be taken into account in any structure theorem. Our result is the following:
\begin{theorem}\label{thm:obstruct}
  Let $G_1, G_2$ be graphs. There exists an internally 4-connected near-regular matroid $M$ having both $M(G_1)$ and $M(G_2)^*$ as a minor.
\end{theorem}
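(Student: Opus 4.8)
The plan is to reduce the statement to a single universal instance and then exhibit one explicit near-regular matroid, built by gluing together totally unimodular representations of the two ``halves''. For the reduction, observe that every graph $G$ is a spanning subgraph of $K_{|V(G)|}$, so $M(G)$ is a restriction, hence a minor, of $M(K_{|V(G)|})$, and $M(K_m)\minorof M(K_n)$ whenever $m\le n$; dualising, $M(G)^{*}\minorof M(K_{|V(G)|})^{*}$. Therefore, writing $n:=\max(|V(G_1)|,|V(G_2)|)$, it suffices to produce, for every $n$, an internally $4$-connected near-regular matroid $M_n$ having both $M(K_n)$ and $M(K_n)^{*}$ as minors. (Any family of graphs whose cycle matroids realise every graphic matroid as a minor would serve; complete graphs are convenient.)

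Fix an orientation of $K_n$ and let $A$ be the reduced representation of $M(K_n)$ with respect to a spanning star, so that $A$ is a totally unimodular $(n-1)\times\binom{n-1}{2}$ matrix with $M(K_n)=M[I\mid A]$ and $M(K_n)^{*}=M[I\mid A^{\mathsf T}]$ (signs are immaterial once we remember $A$ is totally unimodular). Working over $\Q(\alpha)$, put
\[
 \hat A=\begin{pmatrix} A & D\\ 0 & A^{\mathsf T}\end{pmatrix},\qquad M_n:=M[I\mid\hat A],
\]
where $D$ is a matrix over $\Q(\alpha)$ still to be chosen. Partition the columns of $[I\mid\hat A]$ into sets $P_1,P_2,P_3,P_4$, with $P_1\cup P_3$ carrying the identity, $P_2$ carrying $\binom{A}{0}$, and $P_4$ carrying $\binom{D}{A^{\mathsf T}}$. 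Then, \emph{irrespective of $D$}, deleting $P_3\cup P_4$ leaves $M[I\mid A]=M(K_n)$, and contracting $P_1$ and then deleting the resulting loops $P_2$ leaves $M[I\mid A^{\mathsf T}]=M(K_n)^{*}$. Thus $M_n$ has both prescribed minors for every $D$, and the whole problem reduces to choosing $D$ well.

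Two properties must be engineered. First, $\hat A$ should be totally near-unimodular, for then $M_n$ is near-regular by Whittle's characterisation. The block-triangular shape already does most of the work: any square submatrix of $\hat A$ that uses as many columns from the $A$-block as rows from the top band has determinant $\det(A')\det(A'')\in\{0,\pm1\}$ for submatrices $A',A''$ of $A,A^{\mathsf T}$, so only the genuinely ``straddling'' subdeterminants, those involving entries of $D$, need to be controlled; one must choose the entries of $D$ from $\{\pm\alpha^{i}(1-\alpha)^{j}\}$ so that these too land in $\{0\}\cup\{\pm\alpha^{i}(1-\alpha)^{j}\}$, and at least one of them should be a non-constant monomial, forcing $M_n$ to be non-regular — as it must be, since by Seymour's theorem (Theorem~\ref{thm:regdec}) a $3$-connected regular matroid is graphic, cographic, or $R_{10}$, and none of these can carry both an arbitrarily large graphic minor and an arbitrarily large cographic minor. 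A clean way to guarantee near-regularity is to present $M_n$ instead as the frame matroid of a gain graph over the multiplicative group $\langle\alpha,1-\alpha\rangle\le\Q(\alpha)^{\times}$, which is automatically representable over $\Q(\alpha)$, so that only the connectivity analysis remains. Second, $M_n$ must be internally $4$-connected; since $M(K_n)$ and $M(K_n)^{*}$ are each internally $4$-connected once $n$ is large, any low-order separation of $M_n$ either lives inside one half or is witnessed by a circuit or cocircuit straddling the two halves, and $D$ must be rich enough to destroy the latter.

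The main obstacle is exactly the tension between these two demands: a sparse $D$ keeps $\hat A$ near-unimodular but leaves a $2$- or $3$-separation between the two blocks, whereas an unstructured dense $D$ destroys total near-unimodularity. The heart of the proof is to find a sufficiently structured $D$ (equivalently, the right gain graph) that threads this needle, and then to carry out the full — and tedious — case check that $M_n$ has no $1$-separation, no $2$-separation, and no $3$-separation with both sides of size at least $4$.
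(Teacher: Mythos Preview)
Your reduction to $G_1=G_2=K_n$ and your block-triangular template
\[
  \hat A=\begin{pmatrix} A & D\\ 0 & A^{\mathsf T}\end{pmatrix}
\]
are fine, and the minor computations for $M(K_n)$ and $M(K_n)^{*}$ are correct.  But the proposal stops precisely where the content begins: you never produce $D$.  You say yourself that ``the heart of the proof is to find a sufficiently structured $D$ \ldots\ and then to carry out the full --- and tedious --- case check,'' and neither of these is done.  Worse, the case check is not merely tedious: it would have to be carried out for \emph{every} $n$, because your $D=D_n$ changes size with $n$, so you would need a uniform argument that a whole family of matrices is totally near-unimodular and gives internally $4$-connected matroids.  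Your fallback of realising $M_n$ as a frame (gain-graph) matroid over $\langle\alpha,1-\alpha\rangle$ does not rescue this: frame matroids over a fixed group do not contain all cographic matroids (already signed-graphic matroids miss some, as noted in the introduction), so you cannot hope to get $M(K_n)^{*}$ inside such a matroid for large $n$.

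The paper avoids exactly this difficulty by \emph{not} letting the glue grow with $n$.  It exhibits a single $12$-element near-regular matroid $M_{12}$ that is internally $4$-connected, self-dual, has an $M(K_4)$-restriction on one half of its ground set, and has the dual $M(K_4)$-structure on the other half.  Since $M(K_4)$ is a modular flat of $M(K_n)$, one forms $M':=\bigl(P_{M(K_4)}(M(K_n),M_{12})\bigr)^{*}$ and then $M:=P_{M(K_4)}(M(K_n),M')$.  Near-regularity is preserved by a partial-field version of Brylawski's theorem on generalized parallel connection (Theorem~\ref{thm:parcon}), and internal $4$-connectivity is preserved by a single clean lemma (Lemma~\ref{lem:no3seps}) about gluing an internally $4$-connected matroid to $M(K_n)$ along $M(K_4)$.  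Thus all the finite verification is confined to the $12$-element matroid, and the ``threading the needle'' that you identify as the obstacle is done once, not once per $n$.  Your template is compatible with this --- the matrix of $M$ does have roughly your block shape --- but the essential idea you are missing is to factor the construction through a fixed small connector and a modular $M(K_4)$ so that both the representability and the connectivity arguments become size-independent.
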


From this, and the fact that not all cographic matroids are signed-graphic, it follows that Conjecture~\ref{con:nregdecompfalse} is false. More generally, suppose we want to find a decomposition theorem for near-regular matroids, such that each basic class that contains all graphic matroids, does not contain all cographic matroids. Theorem~\ref{thm:obstruct} implies that such a characterization must employ at least 4-sums.

The paper is organized as follows. In Section~\ref{sec:prelim} we give some preliminary definitions. In Section \ref{sec:genparcon} we prove a lemma that shows how generalized parallel connection can preserve representability over a partial field. In Section~\ref{sec:4sums} we prove Theorem~\ref{thm:obstruct}. 
We conclude in Section~\ref{sec:conjectures} with some updated conjectures.

Throughout this paper we assume familiarity with matroid theory as set out in Oxley~\cite{oxley}.

\section{Preliminaries}\label{sec:prelim}
\subsection{Connectivity}
In addition to the usual definitions of connectivity and separations (see Oxley \cite[Chapter 8]{oxley}) we say a partition $(A,B)$ of the ground set of a matroid is \emph{$k$-separating} if $\rank_M(A) + \rank_M(B) - \rank(M) < k$. Recall that $(A,B)$ is a \emph{$k$-separation} if it is $k$-separating and $\min\{|A|,|B|\} \geq k$.

\begin{definition}
  A matroid is \emph{internally 4-connected} if it is 3-connected and $\min\{|X|,|Y|\} = 3$ for every 3-separation $(X,Y)$.
\end{definition}

This notion of connectivity is useful in our context. For instance, Theorem~\ref{thm:regdec} can be rephrased as follows:
\begin{theorem}
  Let $M$ be an internally 4-connected regular matroid. Then $M$ is graphic, cographic, or equal to $R_{10}$.
\end{theorem}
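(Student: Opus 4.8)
The plan is to deduce this internally 4-connected reformulation directly from Seymour's full Decomposition Theorem (Theorem~\ref{thm:regdec}). The core observation is that each of the three permitted operations---$1$-, $2$-, and $3$-sums---introduces a low-order separation that an internally 4-connected matroid cannot possess. Hence the Seymour decomposition of such an $M$ can involve no nontrivial sum at all, which forces $M$ to be a single indecomposable building block, i.e.\ graphic, cographic, or equal to $R_{10}$.

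First I would recall the effect of the low-order sums on connectivity. A $1$-sum $M_1 \oplus_1 M_2$ is a direct sum and hence yields a disconnected matroid, while a $2$-sum $M_1 \oplus_2 M_2$ produces a $2$-separation whose two sides each have at least two elements. Neither is compatible with $3$-connectivity. Since an internally 4-connected matroid is $3$-connected by definition, $M$ can be written as neither a (nontrivial) $1$-sum nor a $2$-sum.

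The decisive step concerns $3$-sums. Recalling the definition (see Oxley~\cite{oxley}), if $M = M_1 \oplus_3 M_2$ then $E(M_1) \cap E(M_2)$ is a common triangle $T$ with $|T| = 3$, and the construction requires $|E(M_i)| \geq 7$ for $i \in \{1,2\}$. Setting $X := E(M_1) \setminus T$ and $Y := E(M_2) \setminus T$, the pair $(X,Y)$ is a $3$-separation of $M$ with $|X| = |E(M_1)| - 3 \geq 4$ and likewise $|Y| \geq 4$. Thus $\min\{|X|,|Y|\} \geq 4 > 3$, which directly contradicts the defining property of internal 4-connectivity that \emph{every} $3$-separation $(X,Y)$ satisfies $\min\{|X|,|Y|\} = 3$. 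Hence $M$ admits no nontrivial $3$-sum decomposition either.

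Combining these observations, the decomposition guaranteed by Theorem~\ref{thm:regdec} cannot apply any $1$-, $2$-, or $3$-sum to $M$, so $M$ must itself be one of the base matroids, completing the proof. I do not expect a genuine obstacle here, since the result is essentially a packaging of Seymour's theorem; the only point requiring care is the bookkeeping in the third step, where one must invoke the precise size condition ($|E(M_i)| \geq 7$) built into the definition of the $3$-sum to guarantee that both sides of the induced $3$-separation have at least four elements. Everything else follows immediately from the definitions of $k$-separation and internal 4-connectivity.
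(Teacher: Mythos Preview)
Your proposal is correct and is precisely the standard argument that the paper implicitly invokes: the paper does not actually prove this theorem but simply presents it as a ``rephrasing'' of Theorem~\ref{thm:regdec}, and your argument is exactly how one makes that rephrasing rigorous. The only minor caveat is that the conclusion should read ``isomorphic to $R_{10}$'' rather than ``equal to $R_{10}$,'' matching the paper's own slight looseness of phrasing.
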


Intuitively, separations $(X,Y)$ where both $|X|$ and $|Y|$ are big should give rise to a decomposition into smaller matroids.
\begin{definition}
  Let $M$ be a matroid, and $N$ a minor of $M$. Let $(X',Y')$ be a $k$-separation of $N$. We say that $(X',Y')$ is \emph{induced} in $M$ if $M$ has a $k$-separation $(X,Y)$ such that $X' \subseteq X$ and $Y'\subseteq Y$. 
\end{definition}

At several points we will use the following easy fact:
\begin{lemma}\label{lem:minorksep}
	Let $M$ be a matroid, let $N$ be a minor of $M$, and let $(A,B)$ be a $k$-separating partition of $E(M)$. Then $(A\cap E(N), B\cap E(N))$ is $k$-separating in $N$.
\end{lemma}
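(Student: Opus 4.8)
The plan is to show that neither deleting nor contracting a single element can raise the connectivity defect of the induced partition, and then to iterate. For a matroid $M'$ and a partition $(X,Y)$ of $E(M')$ write $\lambda_{M'}(X) = \rank_{M'}(X) + \rank_{M'}(Y) - \rank(M')$, so that ``$k$-separating'' means $\lambda < k$. Writing the minor as $N = M \delete D \contract C$ for disjoint sets $C,D$ and peeling off the elements of $C \cup D$ one at a time (deletion and contraction of distinct elements commute), it suffices to prove the single-element statement: for every $e \in E(M)$ and every $X \subseteq E(M)\delete e$,
\[
  \lambda_{M\delete e}(X) \le \lambda_M(X) \qquad\text{and}\qquad \lambda_{M\contract e}(X) \le \lambda_M(X),
\]
where on the left $X$ is paired with its complement $(E(M)\delete e)\delete X$. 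This is exactly what is needed: if $(A,B)$ is the partition of $E(M)$ and, say, $e\in B$, then $A\cap E(N)=A$ is paired inside $E(N)$ with $B\cap E(N)=B\delete e$, so applying the displayed inequality with $X=A$ (and iterating) controls $\lambda_N(A\cap E(N))$.

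For the deletion inequality I would use that $\rank_{M\delete e}$ agrees with $\rank_M$ on subsets of $E(M)\delete e$ and split into two cases. If $e$ is not a coloop of $M$ then $\rank(M\delete e) = \rank(M)$, and since $\rank_M(Y\delete e) \le \rank_M(Y)$ (with $Y = (E(M)\delete e)\delete X$, or $Y=E(M)\delete X$, which contains $e$) we get $\lambda_{M\delete e}(X) \le \lambda_M(X)$. If $e$ is a coloop then $e\notin\closure_M(E(M)\delete e)$, so both $\rank_M(Y)$ and $\rank(M)$ exceed $\rank_M(Y\delete e)$ and $\rank(M\delete e)$ respectively by exactly one; the two corrections cancel and $\lambda_{M\delete e}(X) = \lambda_M(X)$. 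For the contraction inequality I would simply dualize: $\lambda_{M'}(Z) = \lambda_{(M')^*}(Z)$ for every matroid $M'$ and $Z\subseteq E(M')$, and $(M\contract e)^* = M^*\delete e$, so the contraction bound for $M$ follows from the deletion bound applied to $M^*$. (A direct argument is equally short: a loop reduces to the deletion case, and otherwise $\rank_{M\contract e}(Z) = \rank_M(Z\cup e) - 1$ combined with $\rank_M(X\cup e) \le \rank_M(X)+1$ does it.)

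Chaining the single-element bounds along a sequence of deletions and contractions realising $N$ from $M$ gives $\lambda_N(A\cap E(N)) \le \lambda_M(A) < k$, which is the lemma. I do not expect a genuine obstacle here — the statement is flagged as ``easy'' — and the only points that need a little care are the coloop (respectively loop) degenerate cases in the single-element step, and the bookkeeping that passing to the minor replaces the complementary block $B$ by $B\cap E(N)$, so that the connectivity computed inside $N$ really is the quantity we want to bound.
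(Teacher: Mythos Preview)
Your argument is correct: the single-element inequalities for deletion and contraction are handled cleanly (the coloop case for deletion and the duality/direct argument for contraction are both fine), and iterating them gives $\lambda_N(A\cap E(N)) \le \lambda_M(A) < k$ as required. The paper itself does not supply a proof --- it simply records the lemma as an ``easy fact'' and moves on --- so there is nothing to compare your approach against; what you have written is the standard verification and would serve perfectly well as the omitted proof.
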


Note that $(A\cap E(N), B\cap E(N))$ need not be \emph{exactly} $k$-separating.

\subsection{Partial fields}
Our main tool in the proof of Theorem~\ref{thm:obstruct} is useful outside the scope of this paper. Hence we have stated it in the general framework of partial fields. For that purpose we need a few definitions. More on the theory of partial fields can be found in Semple and Whittle~\cite{SW96} and in Pendavingh and Van Zwam~\cite{PZ08lift,PZ08conf}.
\begin{definition}
  A \emph{partial field} is a pair $(\ring, \group)$, where $\ring$ is a commutative ring with identity, and $\group$ is a subgroup of the group of units of $\ring$ such that $-1 \in \group$.
\end{definition}

For example, the near-regular partial field is $\left(\Q(\alpha), \langle -1,\alpha, 1-\alpha\rangle\right)$, where $\langle S \rangle$ denotes the multiplicative group generated by $S$. For $\parf = (R,G)$, we abbreviate $p \in G\cup\{0\}$ to $p\in\parf$.

We will adopt the convention that matrices have labelled rows and columns, so an $X\times Y$ matrix $A$ is a matrix whose rows are labelled by the (ordered) set $X$ and whose columns are labelled by the (ordered) set $Y$. The identity matrix with rows and columns labelled by $X$ will be denoted by $I_X$. We will omit the subscript if it can be deduced from the context.

Let $A$ be an $X\times Y$ matrix. If $X'\subseteq X$ and $Y'\subseteq Y$ then we denote the submatrix of $A$ indexed by $X'$ and $Y'$ by $A[X',Y']$. If $Z\subseteq X\cup Y$ then we write $A[Z] := A[X\cap Z, Y\cap Z]$. If $A$ is an $X\times Y$ matrix, where $X\cap Y = \emptyset$, then we denote by $[I\  A]$ the $X\times (X\cup Y)$ matrix obtained from $A$ by prepending the identity matrix $I_X$.

\begin{definition}
  Let $\parf := (\ring, \group)$ be a partial field, and let $A$ be a matrix with entries in $\ring$. Then $A$ is a \emph{$\parf$-matrix} if, for every square submatrix $A'$ of $A$, either $\det(A') = 0$ or $\det(A') \in \group$.
\end{definition}

\begin{theorem}
  Let $\parf$ be a partial field, let $A$ be an $X\times Y$ $\parf$-matrix for disjoint sets $X$ and $Y$, let $E := X\cup Y$, and let $A' := [I\  A]$. If ${\cal B} = \{B \subseteq E : |B| = |X|, \det(A'[X,B]) \neq 0\}$, then ${\cal B}$ is the set of bases of a matroid.
\end{theorem}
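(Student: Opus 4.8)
The plan is to deduce this from the classical fact that a matrix over a \emph{field} is the representation of a matroid, by passing from $\ring$ to a suitable field. First I would record the routine observation that $A' = [I\ A]$ is itself a $\parf$-matrix; more precisely, that for every $B \subseteq E$ with $|B| = |X|$ we have $\det(A'[X,B]) \in \group \cup \{0\}$. Indeed, writing $C := X \setminus B$ and $D := B \cap Y$ (so that $|C| = |D|$), the square matrix $A'[X,B]$ consists of the standard-basis columns indexed by $B \cap X$ together with the columns of $A$ indexed by $D$; expanding the determinant along those identity columns gives $\det(A'[X,B]) = \pm\det(A[C,D])$, which lies in $\group \cup \{0\}$ because $A$ is a $\parf$-matrix and $-1 \in \group$.

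Next, choose a maximal ideal of $\ring$ (possible by Krull's theorem; we may assume $\ring$ is nontrivial) and let $\phi\colon \ring \to \field$ be the quotient homomorphism onto the resulting field $\field$. Since every element of $\group$ is a unit of $\ring$, its image under $\phi$ is a unit of $\field$, hence nonzero; thus $\phi$ sends $\group \cup \{0\}$ injectively into $\field$, with $0 \mapsto 0$. Applying $\phi$ entrywise to $A'$ gives the $X \times E$ matrix $\phi(A') = [I\ \phi(A)]$ over $\field$, and because $\phi$ commutes with determinants, the previous paragraph yields, for every $B \subseteq E$ with $|B| = |X|$,
\[
  \det(A'[X,B]) \neq 0 \quad\Longleftrightarrow\quad \det(\phi(A')[X,B]) \neq 0 .
\]
By the classical theorem for matrices over a field (see Oxley~\cite{oxley}), the set $\{B \subseteq E : |B| = |X|,\ \det(\phi(A')[X,B]) \neq 0\}$ is the set of bases of a matroid on $E$; by the displayed equivalence this set is exactly ${\cal B}$, and we are done.

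There is no genuinely hard step: the only thing needing care is the reduction to a field, namely the existence of a ring homomorphism from $\ring$ onto a field that vanishes on no element of $\group$ — and this is automatic since $\group$ consists of units, so that no Grassmann--Pl\"ucker manipulation is required. (Alternatively one could avoid the quotient entirely and verify the basis-exchange axiom for ${\cal B}$ directly, using that the Grassmann--Pl\"ucker relations among the subdeterminants of $A'$ already hold over $\ring$; but the reduction to a field is shorter.)
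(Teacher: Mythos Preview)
The paper states this theorem without proof, treating it as a foundational result from the partial-field literature (Semple and Whittle~\cite{SW96}). Your argument is correct, and in fact it is exactly the reduction-to-a-field trick that the paper itself deploys later, in the last paragraph of the proof of Theorem~\ref{thm:parcon}: pick a maximal ideal of $\ring$, pass to the field $\ring/I$ via the canonical homomorphism $\phi$, observe that units (hence elements of $\group$) survive the quotient, conclude that $\det(D)=0$ iff $\det(\phi(D))=0$ for square $\parf$-submatrices, and then invoke the classical field case.

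One small imprecision worth fixing: you assert that $\phi$ sends $\group\cup\{0\}$ \emph{injectively} into $\field$. That is not true in general (take $\ring=\Z$, $\group=\{\pm 1\}$, and $I=(2)$, so that $\phi(1)=\phi(-1)$). Fortunately you do not need injectivity anywhere; all that is required for the displayed equivalence is that $\phi$ maps $\group$ into $\field\setminus\{0\}$ and $0$ to $0$, which holds because units map to units under any ring homomorphism. With that word deleted the proof is clean.
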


We denote this matroid by $M[I\ A]$.

\subsection{Pivoting}
Let $A$ be an $X\times Y$ $\parf$-matrix. Then $X$ is a basis of $M[I\ A]$. We say that $X$ is the \emph{displayed} basis. Pivoting in the matrix allows us to change the basis that is displayed. Roughly speaking a pivot in $A$ consists of row reduction applied to $[I\ A]$, followed by a column exchange. The precise definition is as follows:

\begin{definition}\label{def:pivot}
	Let $A$ be an $X\times Y$ matrix over a ring $\ring$, and let $x \in X, y \in Y$ be such that $A_{xy} \in \ring^*$. Then $A^{xy}$ is the $(X\setminus x)\cup y \times (Y\setminus y)\cup x$ matrix with entries
	\begin{align*}
	  (A^{xy})_{uv} = \left\{ \begin{array}{ll}
	    (A_{xy})^{-1} \quad & \textrm{if } uv = yx\\
	    (A_{xy})^{-1} A_{xv} & \textrm{if } u = y, v\neq x\\
	    -A_{uy} (A_{xy})^{-1} & \textrm{if } v = x, u \neq y\\
	    A_{uv} - A_{uy} (A_{xy})^{-1} A_{xv} & \textrm{otherwise.}
	  \end{array}\right.
	\end{align*}
\end{definition}

We say that $A^{xy}$ was obtained from $A$ by \emph{pivoting}. Slightly less opaquely, if
\begin{align*}
			A = \kbordermatrix{ & y & Y'\\
	                     x  & a & \: c \: \\
	                     X' & b & \phantom{\dfrac{1}{1}}D\phantom{\dfrac{1}{1}}}
\end{align*}
then
\begin{align*}
	 A^{xy} = \kbordermatrix{ & x & Y'\\
	                     y  & a^{-1} & \: a^{-1}c \: \\
	                      X' & -ba^{-1} & \phantom{\dfrac{1}{1}}D-ba^{-1}c}.
\end{align*}

As Semple and Whittle\cite{SW96} proved, pivoting maps $\parf$-matrices to $\parf$-matrices:
\begin{proposition}\label{prop:pivotproper}
  Let $A$ be an $X\times Y$  $\parf$-matrix, and let $x \in X, y \in Y$ be such that $A_{xy} \neq 0$. Then $A^{xy}$ is a $\parf$-matrix, and $M[I\ A] = M[I\ A^{xy}]$.
\end{proposition}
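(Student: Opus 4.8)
The plan is to derive both assertions from a single structural fact: pivoting in $A$ at $(x,y)$ is simply a change of the displayed basis of $M[I\ A]$, realised by left-multiplication of $[I\ A]$ by an invertible matrix. Write $E := X\cup Y$, regard $A' := [I_X\ A]$ as an $X\times E$ matrix, and set $\tilde X := (X\setminus x)\cup y$ and $\tilde Y := (Y\setminus y)\cup x$, so that $(\tilde X,\tilde Y)$ is again a partition of $E$. Since $A$ is a $\parf$-matrix and $A_{xy}\neq 0$, the $1\times 1$ submatrix determinant $A_{xy}$ lies in $\group$; in particular $A_{xy}$ is a unit of $\ring$, so every entry appearing in Definition~\ref{def:pivot} lies in $\ring$ and $A^{xy}$ is a well-defined $\tilde X\times\tilde Y$ matrix over $\ring$.

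First I would establish the change-of-basis identity. The submatrix $A'[X,\tilde X]$, an $X\times\tilde X$ matrix, consists of the columns $e_z$ ($z\in X\setminus x$) of $I_X$ together with the $y$-column of $A$; after ordering so that $x$ and $y$ come last it is block lower-triangular with diagonal blocks $I$ and $(A_{xy})$, hence invertible with determinant $\pm A_{xy}$. Put $T := (A'[X,\tilde X])^{-1}$, a $\tilde X\times X$ matrix; then $TA'$ is a $\tilde X\times E$ matrix with $TA'[\tilde X,\tilde X] = I$, so $TA' = [I_{\tilde X}\ TA'[\tilde X,\tilde Y]]$. I claim that $TA'[\tilde X,\tilde Y] = A^{xy}$, i.e.\ $TA' = [I_{\tilde X}\ A^{xy}]$. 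This is the one genuinely computational step: one computes the four blocks of $TA'[\tilde X,\tilde Y]$ according to whether the row is $y$ or lies in $X\setminus x$ and the column is $x$ or lies in $Y\setminus y$, and checks that they match the four cases of Definition~\ref{def:pivot} (this identity also appears in Semple and Whittle~\cite{SW96}).

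Granting this, both conclusions follow formally. For $B\subseteq E$ with $|B| = |X|$ we have $[I_{\tilde X}\ A^{xy}][\tilde X,B] = (TA')[\tilde X,B] = T\cdot A'[X,B]$, so $\det\bigl([I_{\tilde X}\ A^{xy}][\tilde X,B]\bigr) = \det(T)\det\bigl(A'[X,B]\bigr)$ with $\det(T) = \pm A_{xy}^{-1}\neq 0$; since $M[I\ A]$ and $M[I\ A^{xy}]$ both have ground set $E$ with matching column labels, this shows they have the same bases, i.e.\ $M[I\ A] = M[I\ A^{xy}]$. For the $\parf$-matrix property, let $A^{xy}[X_1,Y_1]$ be an arbitrary square submatrix and set $B := (\tilde X\setminus X_1)\cup Y_1$, a $|X|$-subset of $E$. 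Laplace expansion of $[I_{\tilde X}\ A^{xy}][\tilde X,B]$ along its unit-vector columns yields $\pm\det\bigl(A^{xy}[X_1,Y_1]\bigr)$, while Laplace expansion of $A'[X,B]$ along its unit-vector columns yields $\pm\det\bigl(A[X\setminus(B\cap X),\,B\cap Y]\bigr)$, a square submatrix determinant of $A$ and hence an element of $\group\cup\{0\}$. Combining with the previous identity, $\det\bigl(A^{xy}[X_1,Y_1]\bigr) = \pm A_{xy}^{-1}z$ for some $z\in\group\cup\{0\}$; as $\group$ is a group containing $A_{xy}$ and $-1$, this product lies in $\group\cup\{0\}$. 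Since the submatrix was arbitrary, $A^{xy}$ is a $\parf$-matrix.

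The only real obstacle is the verification claimed in the second paragraph, that $TA' = [I_{\tilde X}\ A^{xy}]$. This is a direct block-matrix computation, but it is the one place where one must be careful about the row and column orderings and the signs they introduce; once it is in hand, the remaining steps use nothing beyond multiplicativity of the determinant, Laplace expansion along unit columns, and the fact that $\group$ is a group.
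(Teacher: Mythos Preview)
The paper does not actually supply a proof of this proposition: it is stated with the attribution ``As Semple and Whittle~\cite{SW96} proved, pivoting maps $\parf$-matrices to $\parf$-matrices'', and no argument is given in the paper itself. So there is nothing in the paper to compare your attempt against directly.

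That said, your argument is correct and is essentially the standard one (and, as you note, the one in Semple and Whittle). The single structural identity $TA' = [I_{\tilde X}\ A^{xy}]$ really does hold --- with the orderings $X = (X\setminus x, x)$ and $\tilde X = (X\setminus x, y)$ one has
\[
A'[X,\tilde X] = \begin{pmatrix} I & b\\ 0 & a\end{pmatrix},\qquad
T = \begin{pmatrix} I & -ba^{-1}\\ 0 & a^{-1}\end{pmatrix},
\]
and multiplying $T$ into the $x$-column and into a general $v\in Y\setminus y$ column of $A'$ reproduces exactly the four cases of Definition~\ref{def:pivot}. Once that is in place, your two deductions are clean: the basis sets coincide because $\det(T)=\pm A_{xy}^{-1}$ is a unit, and every square subdeterminant of $A^{xy}$ equals $\pm A_{xy}^{-1}$ times a square subdeterminant of $A$ via the Laplace expansions you describe, hence lies in $\group\cup\{0\}$ since $-1,A_{xy}\in\group$. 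There is no gap.
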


Semple and Whittle also showed that pivots can be used to compute determinants of $\parf$-matrices:
\begin{lemma}\label{lem:detpivot}
  Let $\parf$ be a partial field, and let $A$ be an $X\times Y$ $\parf$-matrix with $|X| = |Y|$. If $x\in X, y\in Y$ is such that $A_{xy} \neq 0$ then
  \begin{align*}
  \det(A)= (-1)^{x+y} A_{xy} \det(A^{xy}[X\setminus x, Y\setminus y]).
  \end{align*}
\end{lemma}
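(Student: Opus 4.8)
The plan is to recognize the asserted identity as the classical Schur--complement (pivot) formula for determinants, specialized to a commutative ring in which the pivot entry happens to be a unit. First I would observe that, since $A$ is a $\parf$-matrix and the $1\times 1$ submatrix $[A_{xy}]$ has nonzero determinant, $a := A_{xy}$ lies in $\group$ and is in particular a unit of $\ring$; hence $a^{-1}$ exists and every expression occurring in Definition~\ref{def:pivot} is meaningful. This is the only place the partial-field hypothesis is used.

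Next I would reduce to the case where $x$ is the first element of the ordered set $X$ and $y$ the first element of $Y$. Permuting $x$ into the top row and $y$ into the leftmost column is accomplished by $(i-1)+(j-1)$ adjacent transpositions, where $i$ and $j$ are the positions of $x$ in $X$ and of $y$ in $Y$; this multiplies $\det(A)$ by $(-1)^{(i-1)+(j-1)} = (-1)^{x+y}$ and leaves the relevant submatrix $A^{xy}[X\setminus x, Y\setminus y]$ unchanged. So it suffices to prove $\det(A) = a\,\det(D - b a^{-1} c)$ for
\begin{align*}
A = \begin{pmatrix} a & c \\ b & D \end{pmatrix},
\end{align*}
with $c$ a row block, $b$ a column block and $D$ a square block, and then to note, straight from Definition~\ref{def:pivot}, that $A^{xy}[X\setminus x, Y\setminus y]$ has $(u,v)$-entry $A_{uv} - A_{uy}a^{-1}A_{xv}$, i.e. equals $D - b a^{-1} c$.

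For the core computation I would use the factorization
\begin{align*}
\begin{pmatrix} a & c \\ b & D \end{pmatrix}
= \begin{pmatrix} 1 & 0 \\ b a^{-1} & I \end{pmatrix}
  \begin{pmatrix} a & c \\ 0 & D - b a^{-1} c \end{pmatrix},
\end{align*}
which is valid over any commutative ring once $a$ is a unit (equivalently, one subtracts $b a^{-1}$ times the first row from each remaining row, an elementary operation that does not change the determinant over $\ring$). Since the determinant is multiplicative over a commutative ring, the first factor is lower unitriangular with determinant $1$, and the second factor is block upper triangular, so Laplace expansion along its first column gives determinant $a\,\det(D - b a^{-1} c)$. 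Hence $\det(A) = a\,\det(D - b a^{-1} c)$, and combining this with the sign picked up in the permutation step yields the claimed formula.

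There is no genuine obstacle here: the statement is a standard identity, and the only points needing a little care are (a) the sign bookkeeping, which is dispatched by counting adjacent transpositions, and (b) the fact that the ambient object is a commutative ring rather than a field, which is harmless because the pivot entry is a unit and the one row operation / block factorization used is valid over any commutative ring. One could instead give a direct cofactor-expansion argument, but the factorization route keeps the computation to an absolute minimum.
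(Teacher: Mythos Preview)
Your argument is correct: the Schur-complement factorization works over any commutative ring once the pivot entry is a unit, and you have correctly identified why $A_{xy}$ is a unit and handled the sign bookkeeping. Note, however, that the paper does not give its own proof of this lemma; it merely quotes the result from Semple and Whittle~\cite{SW96}, so there is no in-paper proof to compare against.
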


\section{Generalized parallel connection}\label{sec:genparcon}

Recall the generalized parallel connection of two matroids $M_1$, $M_2$ along a common restriction $N$, denoted by $P_N(M_1,M_2)$. This construction was introduced by Brylawski \cite{Bry75} (see also Oxley~\cite[Section~12.4]{oxley}). Brylawski proved that representability over a field can be preserved under generalized parallel connection, provided that the representations of the common minor are identical. Lee~\cite{Lee90} generalized Brylawski's result to matroids representable over a field such that all subdeterminants are in a multiplicatively closed set. We generalize Brylawski's result further to matroids representable over a partial field, as follows.

\begin{theorem}\label{thm:parcon}
  Suppose $A_1$, $A_2$ are $\parf$-matrices with the following structure:
  \begin{align*}
    A_1 = \kbordermatrix{ & Y_1 & Y\\
                       X_1 & D_1' & 0\\
                       X & D_1 & D_X
                       }, \qquad
    A_2 = \kbordermatrix{ & Y & Y_2\\
                       X & D_X & D_2\\
                       X_2 & 0 & D_2'
                      },
  \end{align*}
  where $X,Y,X_1,Y_1,X_2,Y_2$ are pairwise disjoint sets. If $X\cup Y$ is a modular flat of $M[I\  A_1]$ then
  \begin{align*}
    A := \kbordermatrix{
                  & Y_1 & Y   & Y_2\\
              X_1 & D_1'& 0   & 0\\
              X   & D_1 & D_X & D_2\\
              X_2 & 0   & 0   & D_2'
        }
  \end{align*}
  is a $\parf$-matrix. Moreover, if $M_1 = M[I\  A_1]$ and $M_2 = M[I\  A_2]$, then $M[I\  A] = P_{N}(M_1,M_2)$, where $N = M[I\ D_X]$.\index{generalized parallel connection}
\end{theorem}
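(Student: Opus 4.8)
The plan is to reduce both conclusions to a single determinantal computation on the maximal square submatrices of $[I\ A]$, since those control everything: a matrix $B$ over $\ring$ is a $\parf$-matrix precisely when every square submatrix of $[I\ B]$ using all of its rows has determinant in $\group\cup\{0\}$ (expand along the identity columns to recover, up to sign, an arbitrary square submatrix of $B$), and by definition the bases of $M[I\ B]$ are the column sets $Z$ with $\det([I\ B][\cdot,Z])\neq 0$.

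Fix $Z\subseteq E(M_1)\cup E(M_2)$ with $|Z|=|X_1|+|X|+|X_2|$ and set $\Delta:=\det\!\big([I\ A][X_1\cup X\cup X_2,Z]\big)$. First I would expand $\Delta$ along the identity columns lying in $Z$; this rewrites $\Delta$, up to sign, as $\det(A[R,C])$ with $R:=(X_1\cup X\cup X_2)\setminus Z$ and $C:=Z\cap(Y_1\cup Y\cup Y_2)$. Split $R=R_1\cup R_X\cup R_2$ and $C=C_1\cup C_Y\cup C_2$ along the triples $X_1,X,X_2$ and $Y_1,Y,Y_2$. The two zero blocks of $A$ say that rows $R_1$ of $A[R,C]$ are supported on columns $C_1$ and rows $R_2$ on columns $C_2$; expanding $\det(A[R,C])$ first along $R_1$ and then along $R_2$ shows $\Delta=0$ unless $|R_1|\le|C_1|$ and $|R_2|\le|C_2|$, and otherwise
\[
\Delta=\sum_{S_1,S_2}\ \pm\ \det\!\big(A_1[R_1,S_1]\big)\cdot\det\!\big(A_2[R_2,S_2]\big)\cdot\det\!\big(W_{S_1,S_2}\big),
\]
the sum over $S_1\subseteq C_1$, $S_2\subseteq C_2$ with $|S_1|=|R_1|$, $|S_2|=|R_2|$, where $W_{S_1,S_2}:=A\big[R_X,\,(C_1\setminus S_1)\cup C_Y\cup(C_2\setminus S_2)\big]$ is a square submatrix built from the rows $R_X\subseteq X$ (and $A_1[R_1,S_1]=D_1'[R_1,S_1]$, $A_2[R_2,S_2]=D_2'[R_2,S_2]$).

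If $|R_1|=|C_1|$, then $S_1=C_1$ is forced, $W_{C_1,S_2}$ uses no column of $Y_1$, and the sum over $S_2$ reassembles into $\det\!\big(A_2[R_X\cup R_2,C_Y\cup C_2]\big)$, so $\Delta=\pm\det\!\big(A_1[R_1,C_1]\big)\det\!\big(A_2[R_X\cup R_2,C_Y\cup C_2]\big)$; symmetrically if $|R_2|=|C_2|$. Either way $\Delta$ is, up to sign, a product of a subdeterminant of $A_1$ and one of $A_2$, hence lies in $\group\cup\{0\}$ since $\group$ is a group containing $-1$; this already disposes of every $Z$ not meeting both of $E(M_1)\setminus E(N)=X_1\cup Y_1$ and $E(M_2)\setminus E(N)=X_2\cup Y_2$. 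The one genuinely hard case is $|R_1|<|C_1|$ and $|R_2|<|C_2|$: now the sum runs over several pairs $(S_1,S_2)$, and each $W_{S_1,S_2}$ mixes columns of $D_1$ with columns of $D_2$, so is a submatrix of neither $A_1$ nor $A_2$. This is exactly the point at which the hypothesis that $X\cup Y$ is a modular flat of $M_1$ enters, and it is the main obstacle --- the same place modularity is used in Brylawski's original treatment of the generalized parallel connection over a field~\cite{Bry75} and in Lee's extension to subdeterminants confined to a multiplicative group~\cite{Lee90}. I would extract from modularity the statement that, in the representation $[I\ A_1]$, for every flat $G$ of $M_1$ the intersection of the column span of $X\cup Y$ with the column span of $G$ is spanned by columns of elements of $G\cap(X\cup Y)$, and use it to show that the columns indexed by $Y$ can be allocated consistently between the two sides so that the Cauchy--Binet formula collapses the sum either to $0$ or to a single product $\pm\det\!\big([I\ A_1][X_1\cup X,Z_1]\big)\det\!\big([I\ A_2][X\cup X_2,Z_2]\big)$, where $Z_1\subseteq E(M_1)$ and $Z_2\subseteq E(M_2)$ agree on $X\cup Y$ and each span its matroid. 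Carrying out this collapse cleanly is the crux; once it is done, membership of $\Delta$ in $\group\cup\{0\}$ is automatic, as above.

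Granting this, $A$ is a $\parf$-matrix, so $M[I\ A]$ is a matroid. Restricting $[I\ A]$ to the columns of $E(M_1)$ makes the rows $X_2$ zero rows, whence $M[I\ A]|E(M_1)=M[I\ A_1]=M_1$, and symmetrically $M[I\ A]|E(M_2)=M_2$; moreover $M[I\ A]$ has rank $|X_1|+|X|+|X_2|=\rank(M_1)+\rank(M_2)-\rank(N)$. Finally, a flat of any matroid meets each restriction in a flat, so every flat of $M[I\ A]$ meets $E(M_i)$ in a flat of $M_i$; for the converse one checks that a circuit of $M[I\ A]$ forcing an extra element into such a closure would have to straddle $X_1\cup Y_1$ and $X_2\cup Y_2$, and rules that out by the same modularity analysis. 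Since $P_N(M_1,M_2)$ is, by Brylawski's theorem (see Oxley~\cite[Section~12.4]{oxley}), exactly the matroid on $E(M_1)\cup E(M_2)$ whose flats are the sets meeting each $E(M_i)$ in a flat of $M_i$ --- a matroid that exists because $E(N)=X\cup Y$ is a modular flat of $M_1$ --- it follows that $M[I\ A]=P_N(M_1,M_2)$.
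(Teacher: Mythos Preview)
Your setup and the easy cases are fine, but the proof has a real gap exactly where you say it does. In the case $|R_1|<|C_1|$ and $|R_2|<|C_2|$ you have a genuine sum of products of subdeterminants, and over a partial field a sum of elements of $\group\cup\{0\}$ need not lie in $\group\cup\{0\}$. Your plan is to invoke modularity in the form ``for every flat $G$ of $M_1$, the intersection of the column span of $X\cup Y$ with that of $G$ is spanned by columns of $G\cap(X\cup Y)$'' and then collapse the Cauchy--Binet sum to a single term. That formulation of modularity is a field-theoretic one: ``column span'' over a general commutative ring $\ring$ does not behave well enough, and even when it does, being in a span permits arbitrary $\ring$-linear combinations, which do not keep you inside $\parf$. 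You never say what the collapse actually is or why modularity forces it; ``carrying out this collapse cleanly is the crux'' is an admission, not an argument.

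The paper avoids the sum entirely. The relevant consequence of modularity is much sharper than a span statement: with the given block structure, every column of $D_1$ restricted to the rows $X$ is a $\parf$-scalar multiple of some column of $[I\ D_X]$ (this is Lemma~\ref{lem:modflatparallel}, proved from the modular short-circuit axiom). The paper then argues by a minimal counterexample on $|Z|$: if $A[Z]$ has bad determinant, pivoting on a nonzero entry in $D_1'$ leaves $D_X,D_2,D_2'$ untouched and shrinks $Z$, so $Z\cap X_1=\emptyset$; then each remaining column from $Y_1$, restricted to the surviving rows $X'\subseteq X\cup X_2$, is by Lemma~\ref{lem:modflatparallel} either a unit column (shrink $Z$ again) or a $\parf$-multiple of a column from $Y$ (replace it, or get $\det=0$ if that column is already present). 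After these replacements the submatrix sits inside $A_2$, contradiction. No Cauchy--Binet sum ever appears.

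For the identification $M[I\ A]=P_N(M_1,M_2)$ your direct flat check again defers to ``the same modularity analysis''. The paper sidesteps this too: take a maximal ideal $I\lhd\ring$ and pass to the field $\ring/I$; determinants vanish over $\ring$ iff they vanish over $\ring/I$, so $M[I\ A]=M[I\ \phi(A)]$, and over a field Brylawski's original theorem gives $M[I\ \phi(A)]=P_N(M_1,M_2)$ immediately.
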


The main difficulty is to show that $A$ is a $\parf$-matrix. 
To prove this we will use a result known as the modular short-circuit axiom \citep[Theorem 3.11]{Bry75}. We use Oxley's formulation~\cite[Theorem 6.9.9]{oxley}, and refer to that book for a proof.

\begin{lemma}\label{lem:modshortcircuit}
  Let $M$ be a matroid and $X \subseteq E$ nonempty. The following statements are equivalent:
  \begin{enumerate}
    \item\label{it:msc1} $X$ is a modular flat of $M$;
    \item\label{it:msc2} For every circuit $C$ such that $C\setminus X \neq \emptyset$, there is an element $x \in X$ such that $(C\setminus X) \cup x$ is dependent.
    \item\label{it:msc3} For every circuit $C$, and for every $e \in C\setminus X$, there are an $f \in X$ and a circuit $C'$ such that $e \in C'$ and $C' \subseteq (C\setminus X) \cup f$.
  \end{enumerate}
\end{lemma}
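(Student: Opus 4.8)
The plan is to prove the three conditions equivalent by establishing the cycle (\ref{it:msc1})$\Rightarrow$(\ref{it:msc3})$\Rightarrow$(\ref{it:msc2})$\Rightarrow$(\ref{it:msc1}). Routing the awkward passage (\ref{it:msc2})$\Rightarrow$(\ref{it:msc3}) through (\ref{it:msc1}) in this way means every single arrow is either a one-line circuit observation or a rank computation with modular pairs, and I never have to manufacture a short circuit directly out of (\ref{it:msc2}). As a preliminary reduction I would assume $M$ simple: loops lie in every flat and in $X$, and passing to the simplification affects neither the rank identities that define modularity nor, after routine bookkeeping, the circuit conditions (\ref{it:msc2}) and (\ref{it:msc3}). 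In the simple case, conditions (\ref{it:msc2}) and (\ref{it:msc3}) also force $X$ to be closed, since a would-be element of $\closure(X)\setminus X$ would produce a two-element dependent set, which a simple matroid forbids.

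For (\ref{it:msc1})$\Rightarrow$(\ref{it:msc3}) I fix a circuit $C$ and $e\in C\setminus X$, and may assume $C\cap X\neq\emptyset$ (otherwise $C$ itself is the required $C'$). Write $W:=\closure((C\setminus X)\setminus e)$ and $W^+:=\closure(W\cup e)$. Since $C\cap X\neq\emptyset$, the set $C\setminus X$ is a proper subset of the circuit $C$ and hence independent, so $e\notin W$ and $r(W^+)=r(W)+1$. As $X$ is a modular flat, both $(X,W)$ and $(X,W^+)$ are modular pairs; subtracting their modular rank equations and using $r(W^+)-r(W)=1$ shows that exactly one of $r(X\cap W^+)-r(X\cap W)$ and $r(\closure(X\cup W^+))-r(\closure(X\cup W))$ equals $1$. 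The decisive point --- and the step that actually forces the short circuit through the prescribed $e$ --- is that the second alternative is impossible: because $C$ is a circuit, $e\in\closure(C\setminus e)\subseteq\closure(W\cup X)$, so $W^+\subseteq\closure(X\cup W)$ and therefore $\closure(X\cup W^+)=\closure(X\cup W)$. Hence $r(X\cap W^+)>r(X\cap W)$, and since $X\cap W$ is a flat there is some $f\in X\cap(W^+\setminus W)$. By the exchange property $f\in W^+\setminus W$ gives $e\in\closure(W\cup f)=\closure(((C\setminus X)\setminus e)\cup f)$, so $(C\setminus X)\cup f$ contains a circuit $C'$ with $e\in C'$, exactly as (\ref{it:msc3}) demands.

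The implication (\ref{it:msc3})$\Rightarrow$(\ref{it:msc2}) is immediate: given a circuit $C$ with $C\setminus X\neq\emptyset$, choose any $e\in C\setminus X$ and take the circuit $C'\subseteq(C\setminus X)\cup f$ supplied by (\ref{it:msc3}); then $(C\setminus X)\cup f$ is dependent, so $x:=f$ witnesses (\ref{it:msc2}). The main obstacle is the remaining arrow (\ref{it:msc2})$\Rightarrow$(\ref{it:msc1}), where a purely circuit-theoretic hypothesis must be upgraded to the global rank identity $r(X)+r(Y)=r(X\cap Y)+r(\closure(X\cup Y))$ for every flat $Y$. My plan here is a simultaneous basis-extension argument: pick a basis $B_0$ of $X\cap Y$ and extend it to bases $B_X$ of $X$ and $B_Y$ of $Y$ with $B_X\cap B_Y=B_0$, and show $B_X\cup B_Y$ is independent, which yields $r(\closure(X\cup Y))\ge r(X)+r(Y)-r(X\cap Y)$ while submodularity gives the reverse inequality. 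If $B_X\cup B_Y$ were dependent it would contain a circuit $C$ meeting $B_Y\setminus B_0\subseteq Y\setminus X$, so $C\setminus X\neq\emptyset$ and $C\setminus X\subseteq B_Y\setminus B_0$; condition (\ref{it:msc2}) then produces $x\in X$ with $x\in\closure(C\setminus X)\subseteq Y$, hence $x\in X\cap Y=\closure(B_0)$. Eliminating $x$ between the fundamental circuit of $x$ over $B_0$ and the fundamental circuit of $x$ over $C\setminus X$ yields a circuit contained in $B_0\cup(B_Y\setminus B_0)=B_Y$, contradicting the independence of $B_Y$. I expect this final contradiction-via-circuit-elimination to be the technically fussiest part, mainly in tracking that all the ambient circuits stay inside $B_Y$ and in dispatching the degenerate case $x\in B_0$, which gives a dependent subset of $B_Y$ outright.
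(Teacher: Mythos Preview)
The paper does not actually prove this lemma: it is quoted as Brylawski's modular short-circuit axiom, and the reader is referred to Oxley's book for a proof. Your self-contained argument via the cycle (\ref{it:msc1})$\Rightarrow$(\ref{it:msc3})$\Rightarrow$(\ref{it:msc2})$\Rightarrow$(\ref{it:msc1}) is correct and is essentially the standard one. The crux (\ref{it:msc1})$\Rightarrow$(\ref{it:msc3})---subtracting the modular identities for $(X,W)$ and $(X,W^{+})$ and using $e\in\closure(C\setminus e)\subseteq\closure(X\cup W)$ to force the rank increase into $X\cap W^{+}$---is exactly how the textbook proof runs, and your basis-extension-plus-circuit-elimination for (\ref{it:msc2})$\Rightarrow$(\ref{it:msc1}) is likewise the expected route. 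One small point: your reduction to simple $M$ is a little glib, since if $\ell$ is a loop with $\ell\notin X$ then (\ref{it:msc2}) and (\ref{it:msc3}) hold trivially for the circuit $\{\ell\}$ while $X$ fails to be a flat; this is really a wrinkle in the statement (which tacitly needs $M$ loopless, or $X$ assumed closed) rather than in your argument.
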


The following is an extension of Proposition 4.1.2 in \cite{Bry75} to partial fields. Note that Brylawski proves an ``if and only if'' statement, whereas we only state the ``only if'' direction.

\begin{lemma}\label{lem:modflatparallel}
  Let $M = (E, \mathcal{I})$ be a matroid, and $X$ a modular flat of $M$. Suppose $B_X$ is a basis for $M|X$, and $B\supseteq B_X$ a basis of $M$. Suppose $A$ is a $B\times (E\setminus B)$ $\parf$-matrix such that $M = M[I\: A]$. Then every column of $A[B_X, E\setminus (B\cup X)]$ is a $\parf$-multiple of a column of $[I\: A[B_X,X\setminus B]]$.
\end{lemma}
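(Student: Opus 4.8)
The plan is to unwind the definitions so that the statement becomes a purely combinatorial assertion about circuits, then invoke the modular short-circuit axiom (Lemma~\ref{lem:modshortcircuit}) in the form of part~\eqref{it:msc3}. First I would fix notation: write $B = B_X \cup B'$ with $B' = B \setminus X$, and take any element $e \in E \setminus (B \cup X)$; I want to show that the column $A[\cdot, e]$, when restricted to the rows indexed by $B_X$, is a $\parf$-multiple of some column of the submatrix $A[B_X, X \setminus B]$ (i.e.\ of one of the columns indexed by the elements of $X$ that are not in the displayed basis). The key translation is the standard fact relating the support of a column of a representation matrix to a fundamental circuit: for each $e \notin B$, the set $C_e := \{e\} \cup \{b \in B : A_{be} \neq 0\}$ is the fundamental circuit of $e$ with respect to $B$, and, up to scaling, the nonzero entries of the column record the coefficients in the unique dependency on $C_e$.

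Next I would apply the modular short-circuit axiom. Since $X$ is a modular flat and $e \in C_e \setminus X$, part~\eqref{it:msc3} of Lemma~\ref{lem:modshortcircuit} gives an element $f \in X$ and a circuit $C'$ with $e \in C'$ and $C' \subseteq (C_e \setminus X) \cup f$. Intersecting $C_e$ with $B$ and with $X$: the elements of $C_e$ lying in $X$ are exactly the elements of $B_X$ appearing with a nonzero entry (because $X$ is a flat, so $C_e \cap X \subseteq \closure(B_X) \cap (\text{circuit elements})$; more carefully, $C_e \cap X$ is spanned by $B_X$ inside $M|X$), while $C_e \setminus X \subseteq \{e\} \cup B'$. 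Hence $C' \subseteq \{e\} \cup B' \cup \{f\}$, so $C'$ is contained in $\{e, f\}$ together with basis elements outside $X$; since $C'$ must itself be a circuit containing $e$, and the only non-basis elements available are $e$ and possibly $f$, one argues $C'$ is forced to be the fundamental circuit $C_f := \{f\} \cup \{b \in B : A_{bf} \neq 0\}$ together with $e$ in a controlled way — the upshot being that $C_e$ and $C_f$ agree on the rows $B_X$ up to a scalar. Concretely, comparing the two circuit dependencies, the $B_X$-part of column $e$ must be proportional to the $B_X$-part of column $f$.

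The final step is to promote this proportionality over $M$ (an abstract matroid fact about which entries vanish together) to an honest $\parf$-multiple identity between the two columns of $A$. Here I would use the determinant machinery: Lemma~\ref{lem:detpivot} and Proposition~\ref{prop:pivotproper} let me compute the relevant ratios of entries as subdeterminants of $A$, each of which is $0$ or a unit in $\group$. Fixing one common nonzero row $b_0 \in B_X$ in the support, the scalar $\lambda := A_{b_0 e} / A_{b_0 f} \in \group$, and for every other $b \in B_X$ one checks $A_{be} = \lambda A_{bf}$ by evaluating a $2\times 2$ determinant $\det A[\{b, b_0\}, \{e, f\}]$ and using that the corresponding $4$-element set is dependent in $M$ (forced by the circuit structure just established), hence that determinant is $0$. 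This yields $A_{be} = \lambda A_{bf}$ for all $b \in B_X$, which is exactly the claim.

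**The main obstacle** I anticipate is the bookkeeping in the middle step: extracting from the modular short-circuit axiom the precise statement that the two fundamental circuits $C_e$ and $C_f$ coincide on the $B_X$-rows up to scaling, rather than merely overlapping. One has to be careful that the circuit $C'$ produced by the axiom is genuinely a fundamental circuit (it need not be a priori), and that replacing $C_e$ by $C'$ doesn't lose control over the $B_X$-coordinates. I expect this is handled by a short uniqueness argument: $e$ has a unique fundamental circuit with respect to $B$, and $C'$, being a circuit through $e$ contained in $(C_e \setminus X) \cup f \subseteq B \cup \{e, f\}$, is the fundamental circuit of $e$ with respect to the basis obtained from $B$ by swapping $f$ for some element of $B_X$ — and modularity of $X$ is exactly what makes such a swap exist. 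Pinning down that sentence rigorously is the crux; everything before it is definition-chasing and everything after it is the determinant calculus already set up in Section~\ref{sec:prelim}.
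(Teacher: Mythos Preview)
Your opening and closing are right: the key is the modular short-circuit axiom (Lemma~\ref{lem:modshortcircuit}\eqref{it:msc3}), and the endgame is to force certain $2\times 2$ subdeterminants to vanish. The paper does exactly this. Where your plan goes astray is the middle paragraph, and precisely at the point you flagged as the obstacle.

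You write that $C'$ ``is forced to be the fundamental circuit $C_f$ together with $e$ in a controlled way''. This cannot be right. From $C' \subseteq (C_e\setminus X)\cup\{f\}$ and $C_e \subseteq B\cup\{e\}$ you get $C' \subseteq (B\setminus B_X)\cup\{e,f\}$, whereas $C_f \subseteq B_X\cup\{f\}$ (because $f\in X = \closure(B_X)$). So $C'$ and $C_f$ meet only in $f$; there is no sense in which $C'$ is built from $C_f$. Likewise, your justification for $\det A[\{b,b_0\},\{e,f\}]=0$ via ``the corresponding $4$-element set is dependent in $M$'' is not the correct criterion: that determinant vanishes iff $(B\setminus\{b,b_0\})\cup\{e,f\}$ is dependent, not iff $\{b,b_0,e,f\}$ is. (Happily, $(B\setminus\{b,b_0\})\cup\{e,f\}$ \emph{does} contain $C'$, so the conclusion is salvageable --- but not by the reasoning you gave.)

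The paper sidesteps all of this with one clean move: contract $B\setminus B_X$. Since $C' \subseteq (B\setminus B_X)\cup\{e,f\}$, in $M' := M/(B\setminus B_X)$ the set $\{e,f\}$ (or just $\{e\}$, if $f\notin C'$) is dependent. But $M'$ is represented by $[I_{B_X}\ A[B_X,E\setminus B]]$, so $e$ and $f$ being parallel in $M'$ says exactly that the column $A[B_X,e]$ is a $\parf$-multiple of $[I\ A][B_X,f]$. No comparison of $C_e$ with $C_f$, no basis swap, no case analysis on whether $f\in B_X$ or $f\in X\setminus B$ --- the contraction handles all of it at once. Your plan can be repaired, but the shortest repair is simply to replace the middle paragraph with this contraction argument.
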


\begin{proof}[Proof of Lemma~\ref{lem:modflatparallel}]
  Let $M$, $X$, $B_X$, $B$, $A$ be as in the lemma, so
  \begin{align*}
    A = \kbordermatrix{ & E \setminus (B\cup X) & X\setminus B\\
                B\setminus B_X & D' & 0\\
                B_X    &   D & D_{B_X}\\
                }.
  \end{align*}
  Let $v \in E\setminus (B\cup X)$, and let $C$ be the $B$-fundamental circuit containing $v$. If $C\cap X = \emptyset$ then $D[B_X,v]$ is an all-zero vector and the result holds, so assume $B_X \cap C \neq \emptyset$. By Lemma \ref{lem:modshortcircuit}\eqref{it:msc3} there are an $x \in X$ and a circuit $C'$ with $v \in C'$ and $C'\subseteq (C\setminus X) \cup x$.

  Let $M' := M\contract (B\setminus B_X)$. Then $C' \cap E(M') = \{v,x\}$ is a circuit of $M'$. Hence all $2\times 2$ subdeterminants of $[I\ A][B_X,\{v,x\}]$ have to be 0, which implies that $A[B_X,v]$ is the all-zero vector or parallel to $[I\ A][B_X,x]$.
\end{proof}

\begin{proof}[Proof of Theorem \ref{thm:parcon}]
  Let $A_1$, $A_2$, $A$ be as in the theorem, and define $E := X_1 \cup X_2 \cup X \cup Y_1 \cup Y_2 \cup Y$. Suppose there exists a $Z\subseteq E$ such that $A[Z]$ is square, yet $\det(A[Z])\not\in\parf$. Assume $A_1, A_2, A, Z$ were chosen so that $|Z|$ is as small as possible.

  If $Z \subseteq X_i \cup Y_i \cup X \cup Y$ for some $i \in \{1,2\}$ then $A[Z]$ is a submatrix of $A_i$, a contradiction. Therefore we may assume that $Z$ meets both $X_1\cup Y_1$ and $X_2\cup Y_2$. We may also assume that $A[Z]$ contains no row or column with only zero entries, so either there are $x \in X_1\cap Z$, $y \in Y_1\cap Z$ with $A_{xy} \neq 0$ or $x \in X\cap Z$, $y \in Y_1 \cap Z$ with $A_{xy} \neq 0$.

  In the former case, pivoting over $xy$ leaves $D_X$, $D_2$, and $D_2'$ unchanged, yet by Lemma~\ref{lem:detpivot} $\det(A[Z])\in\parf$ if and only if $\det(A^{xy}[Z\setminus \{x,y\}])\in\parf$. This contradicts minimality of $|Z|$. Therefore $Z\cap X_1 = \emptyset$. Similarly, $Z\cap X_2 = \emptyset$.

  Define $X' := Z\cap X$. Now pick some $y \in Y_1$. Since $A[X',Y_1\cup Y]$ is obtained from $A[X,Y_1\cup Y]$ by deleting rows, it follows from Lemma~\ref{lem:modflatparallel}, applied to $M[I\ A_1]$, that the column $A[X',y]$ is either a unit vector (i.e. a column of an identity matrix) or parallel to $A[X',y']$ for some $y'\in Y$. In the first case, Lemma~\ref{lem:detpivot} implies again that $\det(A[Z])\in\parf$ if and only if $\det(A[Z\setminus \{x,y\}])\in\parf$, contradicting minimality of $|Z|$. In the second case, if $y' \in Z$ then $\det(A[Z]) = 0$. Otherwise we can replace $y$ by $y'$ without changing $\det(A[Z])$ (up to possible multiplication with some nonzero $p\in\parf$). It follows that $\det(A[Z]) = p' \det(A[Z'])$, where $Z'\subseteq X \cup Y \cup Y_2$, and $p'\in\parf\setminus\{0\}$. But $\det(A[Z'])\in\parf$, so also $\det(A[Z])\in\parf$, a contradiction.

  It remains to prove that $M[I\ A] = P_N(M_1,M_2)$. Suppose $\parf = (R,G)$, and let $I$ be a maximal ideal of $R$. Let $\phi:R\rightarrow R/I$ be the canonical ring homomorphism. For a square $\parf$-matrix $D$ we have $\det(D) = 0$ if and only if $\det(\phi(D)) = 0$. Hence $M[I\ A] = M[I\ \phi(A)]$. But $R/I$ is a field, so the result now follows directly from Brylawski's original theorem.
\end{proof}


The special cases $X = \emptyset$ and $X = \{p\}$ were previously proven by Semple and Whittle~\cite{SW96}.

\section{The need for 4-sums}\label{sec:4sums}
The core of the proof of Theorem \ref{thm:obstruct} will be a special matroid $M_{12} := M[I\ A_{12}]$, where 
\begin{align}
  A_{12} = \kbordermatrix{
        & d & e & f & 4 & 5 & 6\\
      a & 1 & 0 & 1 & 1 & 1 & 0\\
      b & 0 & -1& 1 & 1 & 0 & \alpha\\
      c & 1 & 1 & 0 & 0 & \alpha & -\alpha\\
      1 & 0 & 0 & 0 & 1 & 0 & 1\\
      2 & 0 & 0 & 0 & 0 & 1 & -1\\
      3 & 0 & 0 & 0 & 1 & 1 & 0
      }.
\end{align}

\begin{lemma}\label{lem:M12props}
	The following hold:
	\begin{enumerate}
		\item\label{M12:1} $M_{12}$ is near-regular;
		\item\label{M12:2} $M_{12}$ is internally 4-connected;
		\item\label{M12:3} $M_{12}$ is self-dual;
		\item\label{M12:4} $M_{12} \delete \{1,2,3,4,5,6\} \cong M(K_4)$;
		\item\label{M12:5} $M_{12} \contract \{a,b,c,d,e,f\} \cong M(K_4)$;
		\item\label{M12:6} No triad of $M_{12}\delete \{1,2,3,4,5,6\}$ is a triad of $M_{12}$. 
	\end{enumerate}
\end{lemma}

\begin{figure}[tbp]
\center
  \includegraphics{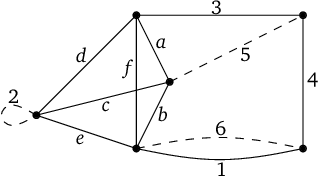}
\caption{Signed-graphic representation of $M_{12}$. Negative edges are dashed; positive edges are solid.}\label{fig:M12}
\end{figure}

We will omit the proofs, each of which boils down to a finite case check that is easily done on a computer and not too onerous by hand. Specifically,  for the first property one can either verify that $A_{12}$ is totally near-unimodular, or that $M_{12}$ contains none of the excluded minors for near-regular matroids (see Hall et al. \cite{HMZ09}). The latter approach is facilitated by observing that $M_{12}$ is the signed-graphic matroid associated with the signed graph illustrated in Figure \ref{fig:M12}. That graph can also be used to verify \eqref{M12:2}, by examining all edge-partitions $(A,B)$ that meet in two or three vertices. The remaining properties are readily extracted from the matrix $A_{12}$.

We will use the $M(K_4)$-restriction to create the generalized parallel connection of $M_{12}$ with $M(K_n)$. The following is well-known, but we will include the short proof.

\begin{lemma}\label{lem:Knconn}
	The matroid $M(K_n)$ is internally 4-connected.
\end{lemma}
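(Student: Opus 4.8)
The plan is to argue directly in the graph $K_n$, using the standard formula for the rank of a graphic matroid. Fix a partition $(X,Y)$ of $E(K_n)$ into two nonempty parts, and for a set $Z$ of edges write $V(Z)$ for the set of vertices incident with some edge of $Z$ and $c(Z)$ for the number of components of the subgraph $(V(Z),Z)$. Since $K_n$ has no isolated vertices, $V(X)\cup V(Y)=V(K_n)$, so from $\rank_{M(K_n)}(Z)=|V(Z)|-c(Z)$ and $\rank(M(K_n))=n-1$ we obtain
\[
  \rank_{M(K_n)}(X)+\rank_{M(K_n)}(Y)-\rank(M(K_n)) \;=\; |V(X)\cap V(Y)| - c(X) - c(Y) + 1 .
\]
First I would dispose of $3$-connectivity: for $n\le 3$ the matroid has too few elements to admit any $3$-separation at all, for $n=4$ it is a quick check, and for $n\ge 4$ it is the familiar fact that the cycle matroid of a simple $3$-connected graph on at least four vertices is $3$-connected. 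It then remains to show that every $3$-separating partition $(X,Y)$ of $E(K_n)$ satisfies $\min\{|X|,|Y|\}\le 3$.

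Assume for contradiction that $(X,Y)$ is $3$-separating with $|X|,|Y|\ge 4$; then $\binom{n}{2}=|E(K_n)|\ge 8$, so $n\ge 5$. The key observation is that, since $K_n$ is complete, no edge can have one end in $V(X)\setminus V(Y)$ and the other in $V(Y)\setminus V(X)$ — such an edge could lie in neither $X$ nor $Y$ — so one of these two sets is empty; say $V(Y)\subseteq V(X)=V(K_n)$. Hence every edge of $Y$ lies inside $K_n[V(Y)]$, so $|Y|\le\binom{|V(Y)|}{2}$. Put $s:=|V(Y)|$. If $s<n$, a vertex outside $V(Y)$ is joined in $X$ to every other vertex, so $c(X)=1$; substituting into the displayed identity and using that the partition is $3$-separating gives $c(Y)\ge s-2$, while $(V(Y),Y)$ has no isolated vertex and hence $c(Y)\le s/2$, forcing $s\le 4$; checking $s\le 3$ and $s=4$ directly then gives $|Y|\le 3$, contradicting $|Y|\ge 4$. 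If $s=n$, then $(V(X),X)$ and $(V(Y),Y)$ are both spanning with all components of size at least two, so $c(X),c(Y)\le\lfloor n/2\rfloor$, while the displayed identity gives $c(X)+c(Y)\ge n-1$; combining this with the elementary bound that a graph on $n$ vertices with $c$ components, each of size at least two, has at most $\binom{n-2c+2}{2}+(c-1)$ edges yields $|X|+|Y|\le n+4<\binom{n}{2}$ for all $n\ge 5$, again a contradiction.

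The routine ingredients — the rank computation, the bookkeeping for empty parts, disconnected subgraphs and small $n$, and the edge-counting inequality — carry no real content. The one step doing actual work is the ``no crossing edge'' observation, which collapses the problem to the case $V(Y)\subseteq V(X)$ and is exactly where completeness of $K_n$ enters; after that the two cases close by short arithmetic, the only point needing a moment's care being that, in the case $s=n$, the extremal configuration for the edge count is the one in which $c(X)$ and $c(Y)$ are as unequal as the constraints $c(X)+c(Y)\ge n-1$ and $c(X),c(Y)\le\lfloor n/2\rfloor$ allow, which one verifies gives the bound $n+4$ used above.
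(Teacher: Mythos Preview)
The paper does not actually prove this lemma --- it is recorded as ``well-known'' and left without argument --- so there is no authorial proof to compare against. Your direct graph-theoretic argument is correct: the connectivity-function identity $\lambda(X)=|V(X)\cap V(Y)|-c(X)-c(Y)+1$ is right, the observation that completeness of $K_n$ forbids an edge between $V(X)\setminus V(Y)$ and $V(Y)\setminus V(X)$ (and hence forces $V(Y)\subseteq V(X)$ up to symmetry) is exactly the point where the hypothesis enters, and both case analyses close as claimed. In the spanning case the extremal edge bound $\binom{n-2c+2}{2}+(c-1)$ is the correct convexity estimate, and maximising $f(c(X))+f(c(Y))$ over $c(X)+c(Y)\ge n-1$, $c(X),c(Y)\le\lfloor n/2\rfloor$ does give at most $n+4$, which is strictly below $\binom{n}{2}$ for every $n\ge 5$. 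The only cosmetic slip is in the sentence handling small $n$, where you write ``3-separation'' while ostensibly disposing of 3-connectivity; since $|E(K_n)|\le 6$ for $n\le 4$ there is room for neither a 2-separation nor a 3-separation with both sides of size at least four, so the point stands regardless.
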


\begin{proof}
	Fix an integer $n$, and suppose $(A,B)$ is a $3$-separation of $M(K_n)$ with $|A|, |B| \geq 4$. It follows that $n \geq 5$. Assume that $\rank(A) \geq \rank(B)$. Note that $\closure(A)$ and $\closure(B)$ induce complete subgraphs of $K_n$, and that these subgraphs meet in at most three vertices. It follows that, for some vertex $v$ of $K_n$, all edges incident with $v$ are in $A$, or all edges are in $B$. Assume the former. Then $\closure(A) = E(K_n)$, and therefore $\rank(A) = n-1$, and $\rank(B) = 2$. But then $B$ is a subset of a triangle of $K_n$, a contradiction.
\end{proof}

We need to show that in forming the generalized parallel connection we do not introduce unwanted 3-separations. The following lemma takes care of this.

\begin{lemma}\label{lem:no3seps}
	Let $M_1 = M(K_n)$ for some $n \geq 5$, and $M_2$ an internally 4-connected matroid such that there is a set $X = E(M_1)\cap E(M_2)$ with $N := M_1|X = M_2|X \cong M(K_4)$. Then $M := P_N(M_1,M_2)$ is a well-defined matroid. If no triad of $N$ is a triad of $M_2$ then $M$ is internally 4-connected.
\end{lemma}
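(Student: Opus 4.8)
The plan is to analyze the connectivity of $M = P_N(M_1, M_2)$ directly, using the standard fact that the generalized parallel connection along a modular flat $N$ behaves well with respect to separations. First I would recall that $M(K_4)$ is a modular flat of $M(K_n)$ (since $M(K_n)$ is representable over $\GF(2)$, or any field, and the flat spanned by a $K_4$-subgraph is modular — a well-known fact about graphic matroids, where modular flats correspond to subgraphs that are "closed under short-circuiting"), so $M = P_N(M_1, M_2)$ is well-defined, and $E(M) = E(M_1) \cup E(M_2)$ with $E(M_1) \cap E(M_2) = X$. I would also record that $M \delete (E(M_2) \setminus X) = M_1$ and $M \delete (E(M_1) \setminus X) = M_2$, and that $M | X = N$, and that both $M_1$ and $M_2$ are restrictions so no separation of $M$ can be "cheaper" than what it induces on the pieces.

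Next I would establish 3-connectedness of $M$. Suppose $(U, V)$ is a $k$-separation of $M$ with $k \le 2$. The generalized parallel connection is built so that $\rank_M(S) = \rank_{M_1}(S \cap E(M_1)) + \rank_{M_2}(S \cap E(M_2)) - \rank_N(S \cap X)$ type identities hold on appropriate sets; more usefully, I would use that for any partition of $E(M)$, the induced partitions on $M_1$ and $M_2$ have connectivity bounded by that of $M$ plus a term coming from how $X$ is split (this is the standard "2-sum/parallel-connection" connectivity bookkeeping, see Oxley Chapter 12). Since $M_1 = M(K_n)$ and $M_2$ are both 3-connected (indeed internally 4-connected by Lemma~\ref{lem:Knconn} and hypothesis), a low-order separation of $M$ would force a low-order separation in one of the pieces unless it is essentially "localized" inside $X$; but $N = M(K_4)$ is 3-connected, and since $n \ge 5$ the matroid $M_1$ is not equal to $N$ (and similarly $M_2 \ne N$), so I can rule out the trivial cases and conclude $M$ is 3-connected.

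Then comes the main point: ruling out "fat" 3-separations. Let $(U, V)$ be a 3-separation of $M$; I must show $\min\{|U|, |V|\} = 3$. Write $U_i = U \cap E(M_i)$, $V_i = V \cap E(M_i)$ for $i = 1, 2$. The connectivity function of $M$ restricted to the two sides relates to those of $M_1$ and $M_2$ via the modular flat $X$; the key inequality is roughly $\lambda_M(U) \ge \lambda_{M_1}(U_1) + \lambda_{M_2}(U_2) - \lambda_N(U \cap X)$, and since $\lambda_N \le 2$ always (as $N$ has rank $3$), a 3-separation of $M$ induces on each $M_i$ a partition $(U_i, V_i)$ that is at most $3$-separating, hence by internal 4-connectedness of $M_i$ has a side of size $\le 3$ — unless the partition of $M_i$ is "trivial," i.e. one of $U_i, V_i$ is contained in $X$ or has at most $3$ elements total on the small side. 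Careful case analysis on how $X$ is distributed between $U$ and $V$ (it could be split, or lie entirely in one side) then pins down that one of $U, V$ has exactly three elements. This is also where the hypothesis "no triad of $N$ is a triad of $M_2$" enters: the one configuration that could create a genuine fat 3-separation is when $X$ is split so that a triad of $N$ sits on one side — the parallel connection would "glue" along that triad and potentially produce a 3-separation $(U,V)$ with both sides large, and the hypothesis is exactly what forbids the corresponding triad from surviving in $M_2$ (hence in $M$), closing that gap.

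The hard part will be the bookkeeping in the final case analysis — correctly tracking how a 3-separation of $M$ descends to the two sides through the modular flat $X \cong M(K_4)$, and in particular verifying that the only obstruction to $\min\{|U|,|V|\} = 3$ is the "triad through $X$" configuration, so that the stated hypothesis on triads is both necessary and sufficient to complete the argument. I would handle this by working with the connectivity function $\lambda$ directly, using submodularity and the rank formula for $P_N(M_1, M_2)$, and by exploiting that in $M(K_4)$ every flat is modular and the triads are exactly the three "stars" and the four "triangles'' complements — a small enough structure that the finitely many ways $X$ can meet $(U,V)$ can be enumerated.
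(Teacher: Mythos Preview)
Your high-level strategy matches the paper's: push a hypothetical separation of $M$ down to the restrictions $M_1$ and $M_2$, use internal 4-connectedness of each piece to bound the small side, and then analyse what happens at $X$. But the centrepiece of your fat-$3$-separation argument does not work. The inequality you label ``key,'' namely $\lambda_M(U) \ge \lambda_{M_1}(U_1) + \lambda_{M_2}(U_2) - \lambda_N(U \cap X)$, points the wrong way: applying submodularity of $\rank_M$ to $U\cap E(M_1)$ and $U\cap E(M_2)$, together with $\rank(M)=\rank(M_1)+\rank(M_2)-\rank(N)$, yields $\lambda_M(U)\le\lambda_{M_1}(U_1)+\lambda_{M_2}(U_2)-\lambda_N(U\cap X)$, which is useless for lower-bounding $\lambda_M$. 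And even with your sign the inequality would give only $\lambda_{M_1}(U_1)+\lambda_{M_2}(U_2)\le 4$, not that each term is at most $2$. The fact you actually need is the one you state earlier and then abandon: since each $M_i$ is a restriction of $M$, any $k$-separating partition of $M$ restricts to a $k$-separating partition of $M_i$ (Lemma~\ref{lem:minorksep}). That alone forces the small side of the induced partition of each $M_i$ to have at most three elements.

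From that point the paper does not enumerate how $X$ meets $(A,B)$. It instead uses two ingredients your outline never isolates: that $E(M_1)$ and $E(M_2)$ are flats of $M$, and that $M(K_n)$ with $n\ge 5$ has no cocircuit of size less than $n-1\ge 4$. The second forces $T:=B\cap E(M_1)\subseteq\closure_M(A)$, and a one-line rank computation then gives $T\subseteq\closure_M(B\setminus T)$; since $B\setminus T\subseteq E(M_2)$ and $E(M_2)$ is a flat, this drags $T$ into $X$ and hence all of $B$ into $E(M_2)$. Internal 4-connectedness of $M_2$ now pins $|A\cap E(M_2)|=3$, and the triad hypothesis forces this set to be a triangle of $N$; its complement $T$ in $X$ is then a triad of $N$, hence independent in $M_1$ and not a cocircuit there, contradicting that $T$ is a $3$-separating triple in the $3$-connected $M_1$. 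Note the deliberate asymmetry: the triad hypothesis is used only on the $M_2$ side, while on the $M_1=M(K_n)$ side one uses the stronger fact that there are no triads at all. (Aside: $M(K_4)$ has four triads, one per vertex, and each is precisely the complement of the opposite triangle, so your ``three stars plus four triangle-complements'' double-counts.)
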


\begin{proof}
	It is well-known (see \cite[Page 236]{oxley}) that $N$ is a modular flat of $M_1$. Hence $M = P_N(M_1,M_2)$ is well-defined. It remains to prove that $M$ is internally 4-connected. Suppose not. $M$ is obviously connected. Suppose $(A,B)$ is a 2-separation of $M$. By relabelling we may assume $|A\cap E(M_1)| \geq |B\cap E(M_1)|$. By Lemma \ref{lem:minorksep} we have that $(A\cap E(M_1), B\cap E(M_1))$ is 2-separating in $M_1$ (since $M_1$ is a restriction of $M$). But $M_1$ is 3-connected, so $|B\cap E(M_1)| \leq 1$. Similarly we have either $|A\cap E(M_2)| \leq 1$ or $|B\cap E(M_2)| \leq	1$. Since $|E(M_1) \cap E(M_2)| = 6$, the latter must hold. Hence $B = \{e,f\}$ for some $e \in E(M_1)\setminus E(N)$ and $f \in E(M_2)\setminus E(N)$. Since $E(M_1)$ and $E(M_2)$ are flats of $M$, we have $\rank_M(\{e,f\}) = 2$. Moreover $e \in \closure_M(E(M_1)\setminus e)$ and $f\in \closure_M(E(M_2)\setminus f)$, so $\{e,f\} \subseteq \closure_M(A)$. But then
	\begin{align}
		\rank_M(A) + \rank_M(B) - \rank(M) = \rank_M(B) = 2,
	\end{align}
	contradicting the fact that $(A,B)$ is a 2-separation.
	
	Next suppose that $(A,B)$ is a 3-separation of $M$ with $|A| \geq 4$ and $|B| \geq 4$. By relabelling we may assume $|A\cap E(M_1)| \geq |B\cap E(M_1)|$. By Lemma \ref{lem:minorksep} again, $(A\cap E(M_1),B\cap E(M_1))$ is 3-separating in $M_1$. Since $M_1$ is internally 4-connected, $|B\cap E(M_1)| \leq 3$. Define $T := B\cap E(M_1)$.
	
	We will show that $T\subseteq \closure_M(B\setminus T)$. Since $M_1$ has no cocircuits of size less than 4, we have $T\subseteq \closure_M(A)$. Therefore
	\begin{align}
		\rank_M(A\cup T) + \rank_M(B\setminus T) - \rank(M) & = \rank_M(A) + \rank_M(B\setminus T) - \rank(M)\notag\\
		 & \leq \rank_M(A) + \rank_M(B) - \rank(M) = 2.\label{eq:1}
	\end{align}
	If $|B\setminus T| \geq 2$ then it follows from 3-connectivity that equality holds in \eqref{eq:1}, so $\rank_M(B) = \rank_M(B\setminus T)$. If $|B\setminus T| = 1$ then $\rank_M(B\setminus T) = 1$ and we must have $\rank_M(B) = 2$. In that case $T$ is a triangle of $M_1$ and some element $e \in E(M_2)\setminus E(M_1)$ is in the closure of $T$. But no such element $e$ exists, since $E(M_1)$ is a flat of $M$.
	
	Note that $B\setminus T \subseteq E(M_2)$. Since $T\subseteq \closure_M(B\setminus T)$ and $E(M_2)$ is a flat of $M$, we have that $T\subseteq E(M_2)$. Hence $T\subseteq E(N)$, and $B\cap E(M_2) = B$. Since $(A\cap E(M_2), B\cap E(M_2))$ is 3-separating and $|B\cap E(M_2)| = |B| \geq 4$, we have $|A\cap E(M_2)| \leq 3$. But $|B\cap E(M_1)| \leq 3$, and therefore $E(N)\setminus B \subseteq A\cap E(M_2)$, from which it follows that $|A\cap E(M_2)| \geq 3$.
	
	Since no triad of $N$ is a triad of $M_2$, we must have that $A\cap E(M_2)$ is a triangle of $M_2$. Hence $B\cap E(N)$ is a triad of $N$. Now consider $(A\cap E(M_1), B\cap E(M_1))$ again. This partition of $M_1$ must be 3-separating, but $B\cap E(M_1)$ is not a triangle of $M_1$, and $M_1$ has no 3-element cocircuits. This contradiction completes the proof.	
\end{proof}

\begin{proof}[Proof of Theorem~\ref{thm:obstruct}]
  It suffices to prove the theorem for $G_1 = G_2 = K_n$, where $n \geq 5$. Label the edges of some $K_4$-restriction $N_1$ of $G_1$ by $\{a,b,c,d,e,f\}$, and define
  \begin{align}
  	M' := \left(P_{N_1}\left(M(G_1), M_{12}\right)\right)^*.
  \end{align}
  By Theorem \ref{thm:parcon}, $M'$ is near-regular, and by Lemma \ref{lem:no3seps}, $M'$ is internally 4-connected.

  Note that we still have $M'|\{1,2,3,4,5,6\} \cong M(K_4)$. Label the edges of some $K_4$-restriction $N_2$ of $G_2$ by $\{1,2,3,4,5,6\}$, and define
  \begin{align}
    M := P_{N_2}\left(M(G_2), M'\right).
  \end{align}
  By Theorem~\ref{thm:parcon}, $M$ is near-regular, and by Lemma \ref{lem:no3seps}, $M$ is internally 4-connected. The result follows.
\end{proof}

Matroid $M_{12}$ was found while studying the 3-separations of $R_{12}$. The unique 3-separation $(X,Y)$ of $R_{12}$ with $|X| = |Y| = 6$ is induced in the class of regular matroids. Pendavingh and Van Zwam had found, using a computer search for blocking sequences, that it is not induced in the class of near-regular matroids.

Unlike $R_{10}$ and $R_{12}$ in Seymour's work, the matroid $M_{12}$ by itself is quite inconspicuous. A natural class of near-regular matroids is the class of near-regular signed-graphic matroids. As indicated earlier, $M_{12}$ is a member of this class (see Figure~\ref{fig:M12}). The $K_4$-restriction is readily identified. $M_{12}$ is self-dual and has an automorphism group of size 6, generated by $(c,e) (d,f) (1,5) (3,6)$ and $(a,d) (b,e) (1,4) (2,3)$.

\section{Conjectures}\label{sec:conjectures}
While Theorem \ref{thm:obstruct} is a bit of a setback, we remain hopeful that a satisfactory decomposition theory for near-regular matroids can be found. First of all, the construction in Section~\ref{sec:4sums} employs only graphic matroids. In fact, it seems difficult to extend the $M(G_1)$-restriction of the 4-sum to some strictly near-regular matroid. The proof of Theorem~\ref{thm:obstruct} suggests the following construction:
\begin{definition}
  Let $M_1, M_2$ be matroids such that $E(M_1)\cap E(M_2) = X$, $N := M_1|X = M_2|X \cong M(K_k)$, and $M_1$ is graphic. Then the \emph{graph $k$-clique sum} of $M_1$ and $M_2$ is $P_N(M_1,M_2)\delete X$.
\end{definition}

Now we offer the following update of Conjecture~\ref{con:nregdecompfalse}:

\begin{conjecture}\label{con:nregdecomp}
  Let $M$ be a near-regular matroid. Then $M$ can be obtained from matroids that are signed-graphic, are the dual of a signed-graphic matroid, or are members of a finite set ${\cal C}$, by applying the following operations:
  \begin{enumerate}
    \item 1-, 2-, and 3-sums;
    \item Graph $k$-clique sums and their duals, where $k \leq 4$.
  \end{enumerate}
\end{conjecture}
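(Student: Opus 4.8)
\medskip

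\emph{Proof proposal.}
The natural line of attack is induction on $|E(M)|$: one peels off structure with the operations in (i)--(ii) until only highly connected matroids remain, and then appeals to a structure theorem to identify those. The preliminary task is to check that every listed operation keeps us inside the near-regular class. For $1$-, $2$-, and $3$-sums this should follow from the standard partial-field machinery of Semple and Whittle~\cite{SW96}, the $3$-sum case (generalized parallel connection along a triangle followed by deletion of the triangle) being governed by the modular-flat hypothesis of Theorem~\ref{thm:parcon} or a variant of it. For a graph $k$-clique sum with $k \le 4$ it is immediate from Theorem~\ref{thm:parcon}: the construction is a generalized parallel connection along $M(K_k)$ in which the graphic side automatically supplies the required modular flat (cliques are modular flats of graphic matroids, as used in the proof of Lemma~\ref{lem:no3seps}), followed by a deletion, and near-regularity is preserved under both minors and duality. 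Hence it suffices to show that no near-regular matroid simultaneously admits none of the listed decompositions nontrivially and is neither signed-graphic, nor the dual of a signed-graphic matroid, nor a member of a fixed finite list ${\cal C}$.

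For the decomposition condition one argues as usual: a near-regular matroid with a $1$- or $2$-separation is a $1$- or $2$-sum of smaller near-regular matroids, and a $3$-separation with both sides large yields a $3$-sum (after the customary treatment of inexact separations and low-rank sides), so the ``irreducible'' matroids are internally $4$-connected; and one wants the converse of the phenomenon in Lemma~\ref{lem:no3seps}, namely that if an internally $4$-connected near-regular $M$ has an $M(K_k)$-restriction ($k \le 4$) that is a modular flat across which a graphic part is glued --- or the dual situation --- then $M$ is itself a graph $k$-clique sum, hence reducible. The problem thus reduces to understanding the internally $4$-connected near-regular matroids having no large graphic restriction attached along a small clique and no dual such structure. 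For this one would invoke the structure theory for the minor-closed, duality-closed class of near-regular matroids --- whose excluded minors are known~\cite{HMZ09} and which coincides with the matroids representable over $\GF(3)$, $\GF(4)$ and $\GF(5)$ simultaneously --- in the hope of concluding that such a matroid is either a $\GF(3)$-frame matroid (hence signed-graphic), or the dual of one, or of bounded size (hence in ${\cal C}$). The exponent $4$ is exactly calibrated: Theorem~\ref{thm:obstruct} shows that $3$ does not suffice, because of $M_{12}$, whereas $M_{12}$ is itself a $4$-clique sum of graphic matroids; and ${\cal C}$ should be expected to contain at least $R_{10}$ together with the small strictly near-regular $3$-connected matroids turned up in the computations of Hlin{\v{e}}n{\'{y}}~\cite{Hli04} and Pendavingh~\cite{Pen04}.

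The hard part will be that last step. Even in the regular case the analogous statement --- Seymour's Theorem~\ref{thm:regdec} --- does not drop out of any soft structural input and required a long, delicate, essentially self-contained argument, and the near-regular case adds two further obstacles. First, the general structure theory for matroids representable over a finite field is only approximate: it describes highly connected members as bounded-rank perturbations of frame matroids and their duals, and turning ``bounded-rank perturbation of a frame matroid'' into ``signed-graphic, or a graph $4$-clique sum of such'' is precisely the part that must be done by hand. Second, unlike for regular matroids, not every signed-graphic matroid is near-regular, so the basic class is the proper subclass of near-regular signed-graphic matroids (studied by Pagano~\cite{Pag98} and by Pendavingh and Van Zwam~\cite{PZ09graphic}), and one must control how an internally $4$-connected near-regular matroid decomposes into such pieces along cliques --- a question with no counterpart in Seymour's proof. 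A sensible first milestone, both to test the exact shape of the statement and to pin down ${\cal C}$, would be to settle the conjecture for near-regular matroids having neither an $M(K_5)$-minor nor an $M(K_5)^*$-minor, a setting restricted enough to allow a semi-exhaustive analysis.
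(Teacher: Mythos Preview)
The statement you are attempting to prove is Conjecture~\ref{con:nregdecomp}. It is a \emph{conjecture}: the paper does not prove it, and indeed presents it precisely as the updated open problem that replaces the disproved Conjecture~\ref{con:nregdecompfalse}. There is therefore no ``paper's own proof'' to compare your proposal against.

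As for the proposal itself, it is not a proof but an outline of a strategy, and you are candid about this (``The hard part will be that last step''). The preliminary reductions you sketch are standard and plausible: closure of the class under the listed operations does follow from Theorem~\ref{thm:parcon} and the Semple--Whittle machinery, and peeling off low-order separations to reduce to the internally $4$-connected case is routine. But the substantive content of the conjecture is exactly the step you defer: proving that every internally $4$-connected near-regular matroid not arising from a graph $4$-clique sum is signed-graphic, the dual of a signed-graphic matroid, or in a fixed finite set. You correctly note that even the regular analogue required Seymour's long self-contained argument, and that the available general structure theory~\cite{GGW06b} gives only bounded-rank perturbations under much stronger connectivity hypotheses. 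Invoking that theory ``in the hope of concluding'' the desired trichotomy is not an argument; closing that gap is the whole problem, and nothing in your proposal addresses it. Your suggested milestone (matroids with no $M(K_5)$- or $M(K_5)^*$-minor) is a reasonable test case, but it too remains open.
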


Note that the work of Geelen et al.~\cite{GGW06b}, when finished, should imply a decomposition into parts that are bounded-rank perturbations of signed-graphic matroids and their duals. However, the bounds they require on connectivity are huge. Conjecture~\ref{con:nregdecomp} expresses our hope that for near-regular matroids specialized methods will give much more refined results.

As noted in the introduction, Seymour's Decomposition Theorem is not the only ingredient in the proof of Theorem \ref{cor:regpoly}. Another requirement is that the basic classes can be recognized in polynomial time. The following result suggests that this may not hold for the basic classes of near-regular matroids:

\begin{theorem}\label{thm:signedgraphnotpoly}
  Let $M$ be a signed-graphic matroid. Let $N$ be a matroid on $E(M)$ given by a rank oracle. It is not possible to decide if $M = N$ using a polynomial number of rank evaluations.
\end{theorem}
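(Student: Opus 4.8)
The plan is an information-theoretic (adversary) argument organised around circuit-hyperplanes.

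First I would isolate a general principle: if a matroid $M$ has $t$ circuit-hyperplanes, then deciding ``$N=M$'' requires at least $t$ rank evaluations. Indeed, relaxing a circuit-hyperplane $H$ of $M$ produces a matroid $M_H\neq M$ with $\rank_{M_H}(H)=\rank(M)$ and $\rank_{M_H}(S)=\rank_M(S)$ for every $S\neq H$; thus $M_H$ is indistinguishable from $M$ by every rank query other than the query of $\rank(H)$, and distinct circuit-hyperplanes give distinct relaxations (if $H\neq H'$ then $M_H$ and $M_{H'}$ disagree on $H$). Let an adversary answer every query $S$ by $\rank_M(S)$. If an algorithm makes $q<t$ queries, then some circuit-hyperplane $H$ is never queried, and the answers received are consistent both with $N=M$ and with $N=M_H$, so the algorithm cannot be correct. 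Hence it suffices to exhibit a family of signed-graphic matroids whose number of circuit-hyperplanes grows faster than any polynomial in the number of elements.

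Next I would construct such a family. For $n\geq 3$ let $\Sigma_n$ be the signed graph on vertices $v_1,\dots,v_n$ obtained from the $n$-cycle by replacing each edge $v_iv_{i+1}$ (indices modulo $n$) by a positive edge $e_i$ and a negative edge $e_i'$ in parallel. Then $M(\Sigma_n)$ is signed-graphic, it has $2n$ elements, and --- since $\Sigma_n$ is connected and unbalanced --- it has rank $n$. For each subset $S\subseteq\{1,\dots,n\}$ of even size, let $C_S$ be the Hamiltonian cycle of the underlying graph using $e_i'$ for $i\in S$ and $e_i$ for $i\notin S$. I claim every $C_S$ is a circuit-hyperplane of $M(\Sigma_n)$: it is a balanced cycle, because its sign is $(-1)^{|S|}=1$, hence a circuit of rank $n-1=\rank(M(\Sigma_n))-1$; and it is closed, because every edge outside $C_S$ is the parallel partner of one of the edges of $C_S$, so adjoining it creates an unbalanced digon and hence an unbalanced connected subgraph spanning all $n$ vertices, which has rank $n$. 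Therefore $M(\Sigma_n)$ has at least $2^{n-1}$ circuit-hyperplanes, which is super-polynomial in $|E(M(\Sigma_n))|=2n$, and the theorem follows from the first paragraph.

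I do not anticipate a serious obstacle. The things that need care are the two verifications above --- that the $C_S$ are genuine circuit-hyperplanes, so that their relaxations are matroids, and that distinct $C_S$ produce distinct relaxations --- and both are routine from the standard descriptions of circuits and of closure in a frame matroid. The one conceptual point is that the $2^{n-1}$ circuit-hyperplanes must belong to a single connected matroid, so that relaxing any one of them perturbs the rank function on exactly one set and each rank query eliminates at most one relaxation. A more naive construction --- take $M$ to be a direct sum of many copies of $M(K_4)$ and relax one circuit-hyperplane in each copy --- fails precisely here: the resulting matroids differ from $M$ on many sets, and a single rank query on the union of the relaxed triangles would distinguish $M$ from all of them at once.
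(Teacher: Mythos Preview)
Your proposal is correct and is essentially the approach the paper sketches: the matroid $M(\Sigma_n)$ you construct is precisely the rank-$n$ ternary swirl, and the paper's argument likewise rests on the fact that ternary swirls have exponentially many circuit-hyperplanes, each of whose relaxations is again a matroid. Your write-up in fact supplies more detail than the paper, which only outlines the idea and attributes it to Geelen and Mayhew.
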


A matroid is \emph{dyadic} if it is representable over $\GF(p)$ for all primes $p > 2$. Since all signed-graphic matroids are dyadic (which was first observed by Dowling~\cite{Dow72}), this in turn implies that dyadic matroids are not polynomial-time recognizable. 

A proof of Theorem \ref{thm:signedgraphnotpoly}, analogous to the proof by Seymour~\cite{Sey81} that binary matroids are not polynomial-time recognizable, was found by Jim Geelen and, independently, by the first author. It involves ternary swirls, which have a number of circuit-hyperplanes that is exponential in the rank. To test if the matroid under consideration is really the ternary swirl, all these circuit-hyperplanes have to be examined, since relaxing any one of them again yields a matroid.

However, this family of signed-graphic matroids is not near-regular for all ranks greater than 3. 
Hence the complexity of recognizing near-regular signed-graphic matroids is still open. The techniques used by Seymour~\cite{Sey81} do not seem to extend, but perhaps some new idea can yield a proof of the following conjecture:

\begin{conjecture}
  Let ${\cal C}$ be the class of near-regular signed-graphic matroids. Then ${\cal C}$ is polynomial-time recognizable.
\end{conjecture}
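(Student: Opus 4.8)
As this statement is a conjecture, what follows is a plan rather than an argument. The idea is to imitate the route from Theorem~\ref{thm:regdec} to Theorem~\ref{cor:regpoly} taken by Truemper~\cite{Tru82}: one couples a decomposition theorem, which reduces the class to a manageable set of basic matroids, with polynomial-time recognition of those basic matroids, and then reassembles. The reason to expect this to work for near-regular signed-graphic matroids, despite Theorem~\ref{thm:signedgraphnotpoly} ruling it out for the larger class of all signed-graphic matroids, is that the obstruction in the proof of that theorem --- the ternary swirls, whose number of circuit-hyperplanes is exponential in the rank --- leaves the near-regular world after rank~$3$. So on the near-regular side the basic pieces should have only polynomially many circuit-hyperplanes, and more generally polynomially bounded ``complexity'': just enough for a rank oracle to pin them down.

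First, I would prove a decomposition theorem for the class ${\cal C}$: every internally $4$-connected member of ${\cal C}$ is graphic, or cographic, or a near-regular-graphic matroid in the sense of Pendavingh and Van Zwam~\cite{PZ09graphic}, or belongs to a finite exceptional set; and, crucially, each such matroid is \emph{determined by a bounded amount of combinatorial data} (an underlying graph or signed graph, plus finitely many bits), so that a candidate can be reconstructed and certified from the rank oracle. This step would draw on the signed-graphic case of Conjecture~\ref{con:nregdecomp}, on Pagano's analysis of signed-graphic near-regular matroids~\cite{Pag98}, and on the excluded-minor description of near-regular matroids due to Hall et al.~\cite{HMZ09}. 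Second, given an arbitrary matroid $M$ by a rank oracle, I would compute in polynomial time a canonical tree decomposition of $M$ along its $1$-, $2$-, and $3$-separations and the graph clique sums of Conjecture~\ref{con:nregdecomp}, using the standard machinery for locating separations of a rank-oracle matroid and organizing them into a tree. Third, I would test each internally $4$-connected piece: graphicness (hence, via duality, cographicness) by Seymour's algorithm~\cite{Sey81}, membership in the finite exceptional set by brute force, and --- the genuinely new ingredient --- membership in the near-regular-graphic family by reconstructing a candidate signed graph and verifying, with the help of the polynomially many circuit-hyperplanes, that it presents exactly $M$. Fourth, I would check that the sums recorded in the decomposition are all of the kind that preserves membership in ${\cal C}$; then $M \in {\cal C}$ if and only if every piece is basic of an admissible type and every sum is admissible.

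The main obstacle is the first step. A decomposition theorem for near-regular matroids does not yet exist --- it is Conjecture~\ref{con:nregdecomp} --- and even granting one, the plan needs the quantitative assertion that near-regularity tames the swirl phenomenon, i.e.\ that the internally $4$-connected members of ${\cal C}$ have complexity (number of circuit-hyperplanes, number of inequivalent signed-graphic representations, and the like) bounded polynomially in the size of the ground set. Without this, the ``reconstruct and verify'' step for a basic piece could itself consume exponentially many rank evaluations, exactly as in the proof of Theorem~\ref{thm:signedgraphnotpoly}. A secondary difficulty is the non-uniqueness both of signed-graphic representations and of the tree decomposition: the recognition algorithm must make only boundedly many guesses when rebuilding the signed graph of a piece, or its running time will blow up. Establishing the first point seems to require a genuinely new understanding of how near-regularity restricts the structure of signed graphs --- which is why we state this only as a conjecture.
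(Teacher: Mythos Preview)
The statement is a \emph{conjecture} in the paper; there is no proof to compare against. The paper offers only a sentence of motivation: the ternary-swirl obstruction behind Theorem~\ref{thm:signedgraphnotpoly} is not near-regular beyond rank~$3$, so ``the techniques used by Seymour~\cite{Sey81} do not seem to extend, but perhaps some new idea can yield a proof.'' You correctly identify the statement as open and present a strategy rather than an argument, and you pick up exactly the paper's one piece of evidence (the swirls leave the near-regular world).

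Your outline is a reasonable sketch of how one might hope to attack the problem, and you are honest about its dependence on Conjecture~\ref{con:nregdecomp} and on an unproven quantitative taming of the swirl phenomenon. One small caution: your plan leans on a decomposition theorem for \emph{all} near-regular matroids to recognize the subclass $\mathcal{C}$ of near-regular \emph{signed-graphic} matroids. That is more than is strictly needed, and it is not obvious that the graph-clique-sum decomposition of Conjecture~\ref{con:nregdecomp} even restricts sensibly to $\mathcal{C}$ (a generalized parallel connection of signed-graphic matroids need not be signed-graphic). A more direct route might be to seek a structure theorem internal to near-regular signed-graphic matroids, in the spirit of Pagano~\cite{Pag98}, rather than routing through the full near-regular decomposition. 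But since the paper proves nothing here, this is all speculation on both sides.
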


In fact, we still have some hope for the following:

\begin{conjecture}
  The class of near-regular matroids is polynomial-time recognizable.
\end{conjecture}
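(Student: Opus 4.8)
The plan is to follow the route by which Truemper deduced Theorem~\ref{cor:regpoly} from Theorem~\ref{thm:regdec}: first secure a decomposition theorem, then prove that each basic class occurring in it is polynomial-time recognizable, and finally turn the decomposition into a recursive algorithm driven by separations that can be located with the rank oracle. A direct attack through the characterisation ``near-regular $=$ representable over $\GF(3)$, $\GF(4)$ and $\GF(5)$'' looks hopeless, since $\GF(2)$-representability is provably not polynomial-time recognizable (Seymour~\cite{Sey81}) and no polynomial test for $\GF(q)$-representability with $q\ge 3$ is known; so there appears to be no shortcut around the structural approach.

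\emph{Step 1: a decomposition theorem.} I would take Conjecture~\ref{con:nregdecomp} (or whatever refinement of it ultimately holds) as the structural backbone. The entire point of Theorem~\ref{thm:obstruct} is that $1$-, $2$- and $3$-sums do not suffice, so the decomposition must include the graph $k$-clique sums and their duals for $k\le 4$. Proving such a theorem is itself a major undertaking; the natural tool is the matroid minors structure theory of Geelen, Gerards and Whittle~\cite{GGW06b}, specialised to near-regular matroids in the hope of replacing its enormous connectivity bounds by ones small enough to be used algorithmically. I expect this to be the hard core of the whole program.

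\emph{Step 2: recognising the basic classes.} The finite class ${\cal C}$ is polynomial-time recognizable because every finite class is, and duals cost nothing: a rank oracle for $M^*$ is computable from one for $M$, so the ``dual-of-signed-graphic'' case reduces to the signed-graphic one. The real content is recognising near-regular signed-graphic matroids. Here one cannot merely search for a signed-graphic representation, since Theorem~\ref{thm:signedgraphnotpoly} shows that the class of \emph{all} signed-graphic matroids is not polynomial-time recognizable; the obstructions are the ternary swirls of rank greater than $3$, which fail to be near-regular. So near-regularity must be used in an essential way. I would combine the excluded-minor list of Hall et al.~\cite{HMZ09} with a structural description of near-regular signed-graphic matroids, refining Pagano~\cite{Pag98} and the near-regular-graphic matroids of Pendavingh and Van Zwam~\cite{PZ09graphic}, chosen rigid enough to be certified with a polynomial number of rank evaluations. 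Producing such a description is exactly where a genuinely new idea is needed.

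\emph{Step 3: assembling the algorithm, and the main obstacle.} Given $M$ by a rank oracle, one can compute its connectivity and, for each fixed $k\le 4$, decide in polynomial time whether $M$ has a $k$-separation and exhibit one, using submodular-minimisation or matroid-intersection techniques on the connectivity function. Using Step~1 one recursively splits $M$ along $1$-, $2$-, $3$-separations and along the $4$-separations corresponding to graph $k$-clique sums; at each split one guesses which side is the graphic part (graphic matroids being recognizable in polynomial time by Seymour~\cite{Sey81}), and since the glued common restriction is an $M(K_k)$ with $k\le 4$ only finitely many identifications of the $K_k$ need to be tried. At the leaves one applies Step~2 and then, Truemper-style, checks against the oracle that the guessed decomposition actually reconstructs $M$, accepting iff some branch succeeds; bounding the number of leaves and the recursion depth gives the polynomial running time. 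The program is blocked at two open points, both of which I expect to be the main difficulty: Step~1, the decomposition theorem, is unproved, and even granting it, Step~2 demands recognising near-regular signed-graphic matroids in polynomial time with the obvious route closed off by Theorem~\ref{thm:signedgraphnotpoly}. A secondary worry is that, unlike Seymour's $1$-, $2$-, $3$-sums, graph clique sums and their duals can overlap and chain in complicated ways, so one must still show that a valid decomposition can be found efficiently and that the number of pieces stays polynomial.
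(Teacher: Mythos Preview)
The statement you were asked to address is a \emph{conjecture}, not a theorem: the paper offers no proof of it and does not claim one. It appears in Section~\ref{sec:conjectures} precisely as an open problem, motivated by the analogy with Theorem~\ref{cor:regpoly} and tempered by the obstacles recorded in Theorems~\ref{thm:obstruct} and~\ref{thm:signedgraphnotpoly}. So there is nothing in the paper to compare your attempt against.

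What you have written is not a proof but a research programme, and you are candid about this: Step~1 assumes Conjecture~\ref{con:nregdecomp}, which is open, and Step~2 assumes the preceding conjecture on polynomial-time recognisability of near-regular signed-graphic matroids, which is also open. Your outline is a reasonable sketch of how one might hope to attack the problem, and it correctly identifies the same two bottlenecks the paper flags. But as a proof it has a genuine gap at each of those two points, and you say so yourself. Until someone proves a decomposition theorem along the lines of Conjecture~\ref{con:nregdecomp} and settles the recognisability of the near-regular signed-graphic class, the conjecture remains open.
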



\paragraph{Acknowledgements}We thank the anonymous referee for many useful suggestions. The third author thanks Rudi Pendavingh for introducing him to matroid theory in general, and to the problem of decomposing near-regular matroids in particular.

\newpage

\renewcommand{\Dutchvon}[2]{#1}
\bibliography{matbib2009}
\bibliographystyle{siam} 

\end{document}